\numberwithin{equation}{section}
\numberwithin{figure}{section}
\theoremstyle{plain}
\newtheorem{thm}{\protect\theoremname}[section]
\theoremstyle{remark}
\newtheorem{rem}[thm]{\protect\remarkname}
\theoremstyle{plain}
\newtheorem{lem}[thm]{\protect\lemmaname}
\theoremstyle{plain}
\newtheorem{prop}[thm]{\protect\propositionname}
\theoremstyle{definition}
\newtheorem{defn}[thm]{\protect\definitionname}
\theoremstyle{remark}
\newtheorem*{rem*}{\protect\remarkname}
\theoremstyle{plain}
\newtheorem{conjecture}[thm]{\protect\conjecturename}
\theoremstyle{plain}
\newtheorem*{lem*}{\protect\lemmaname}
\date{}
\providecommand{\conjecturename}{Conjecture}
\providecommand{\definitionname}{Definition}
\providecommand{\lemmaname}{Lemma}
\providecommand{\propositionname}{Proposition}
\providecommand{\remarkname}{Remark}
\providecommand{\theoremname}{Theorem}
\begin{document}
\title{On the exterior power structure of the cohomology groups for the general
hypergeometric integral}
\author{Hironobu Kimura\\
 Department of Mathematics, Graduate School of Science and\\
 Technology, Kumamoto University}

\maketitle
\global\long\def\C{\mathbb{C}}%
 
\global\long\def\a{\alpha}%
 
\global\long\def\Lc{\mathcal{L}^{\vee}}%
 
\global\long\def\mat#1{\mathrm{Mat}_{#1}(\C)}%
\global\long\def\G{\Gamma}%
 
\global\long\def\A{\mathcal{A}}%
\global\long\def\cL{\mathcal{L}}%
 
\global\long\def\na{\nabla}%
\global\long\def\W{\Omega}%
 
\global\long\def\w{\omega}%
 
\global\long\def\bu{\bullet}%
 
\global\long\def\GL#1{\mathrm{GL}_{#1}(\C)}%
\global\long\def\l{\lambda}%
\global\long\def\de{\delta}%
 
\global\long\def\La{\Lambda}%
\global\long\def\Z{\mathbb{Z}}%
\global\long\def\cbatu{\mathbb{C}^{\times}}%
\global\long\def\tH{\tilde{H}}%
\global\long\def\Ps{\mathbb{P}}%
\global\long\def\gras{\mathrm{G}_{r+1}(\C^{N})}%
\global\long\def\matt#1{\mathrm{Mat}'_{#1}(\C)}%
\global\long\def\zrn{Z_{r+1}}%
\global\long\def\znn{Z_{2}}%
\global\long\def\cO{\mathcal{O}}%
\global\long\def\lto{\longrightarrow}%
\global\long\def\bdy{\mathbf{y}}%
\global\long\def\f{\varphi}%
\global\long\def\sgn#1{\mathrm{sgn}(#1)}%
\global\long\def\ha{\mathfrak{h}}%
 
\global\long\def\cohomr{H^{r}(\Omega^{\bullet}(X^{r}),\boxtimes^{r}\nabla_{\omega})}%
 
\global\long\def\la{\langle}%
 
\global\long\def\ra{\rangle}%
 
\global\long\def\s{\sigma}%
 
\global\long\def\est#1{e_{#1}^{*}}%
\global\long\def\fraks{\mathfrak{S}}%
 
\global\long\def\cS{\mathcal{S}}%
\global\long\def\pa{\partial}%
 
\global\long\def\fj{\mathfrak{j}}%
\global\long\def\clam{\mathcal{M}_{_{\l}}}%
\global\long\def\cD{\mathcal{D}}%
\global\long\def\gl#1{\mathfrak{gl}_{#1}(\C)}%
 
\global\long\def\cohom#1#2#3{H^{#1}(\Omega^{\bullet}(#2),\nabla_{#3})}%

\global\long\def\bdx{\mathbf{x}}%
 
\global\long\def\bde{\mathbf{e}}%
 
\global\long\def\cali{\mathcal{I}}%
 
\global\long\def\cals{\mathcal{S}}%
 
\global\long\def\bdb{\mathbf{b}}%
 
\global\long\def\diag{\mathrm{diag}}%
 
\global\long\def\cW{\mathcal{W}}%
 
\global\long\def\tr{\mathrm{Tr}}%

\begin{abstract}
In this article we study the exterior power structure of the algebraic
de Rham cohomology group associated with the Gelfand hypergeometric
function and its confluent family. The hypergeometric function $F(z)$
is a function on the Zariski open subset $Z_{r+1}\subset\mat{r+1,N}$,
called the generic stratum, defined by an $r$-dimesional integral
on $\Ps^{r}$. For $z\in Z_{r+1}$, the algebraic de Rham cohomology
group is associated to the integral. When $z$ belongs to the particular
subset of $Z_{r+1}$, called the Veronese image, we show that this
cohomology group can be expressed as the exterior power product of
the de Rham cohomology group associated with the hypergeometric function
defined by $1$-dimensional integral.
\end{abstract}

\section{Introduction }

In this article, we study the exterior power structure of the algebraic
de Rham cohomology group associated with Gelfand's hypergeometric
function (HGF) \cite{Ge86} and its confluent family \cite{GRS88,KHT92,KimK96}.
These HGFs, which are called the general HGF in this paper and are
defined on the Grassmannian manifold $\mathrm{G}_{r+1}(\C^{N})$ of
linear subspaces of dimesion $r+1$ in $\C^{N}$ by the Radon transform,
give a far-reaching generalization of classical HGFs of one and several
variables. The general HGF provides a unified approach to the various
aspects of classical HGFs, like the transformation formulae, contiguity
relations, etc. \cite{HTF,KimK96}.

Firstly, let us explain the Gelfand HGF and its de Rham theoretic
aspect. Consider the integral

\begin{equation}
F(z)=\int_{C}U(u,z)\,du_{1}\wedge\cdots\wedge du_{r}\label{eq:intro-1}
\end{equation}
with
\begin{equation}
U(u,z)=f_{1}^{\a_{1}}\cdots f_{N}^{\a_{N}},\label{eq:intro-2}
\end{equation}
where $f_{1},\dots,f_{N}$ are degree-one polynomial functions on
$\C^{r}$ with the variables $u=(u_{1},\dots,u_{r})$:

\[
f_{j}=f_{j}(u,z)=z_{0j}+z_{1j}u_{1}+\cdots+z_{rj}u_{r},\;j=1,\dots,N,
\]
with coefficients $z_{0j},z_{1j,},\dots,z_{rj}\in\C$, and $\a_{1},\dots,\a_{N}$
are complex parameters, and $C=C(z)$ is an $r$-dimensional chain
in the $u$-space. If $C$ is appropriately chosen, $F(z)$ gives
a multivalued holomorphic function of $z=(z_{ij})$ on some Zariski
open subset of $\mat{r+1,N}$ \cite{Ao-Kita-book}. As particular
cases, we obtain the famous Gauss HGF and the Lauricella HGF $F_{D}$
as follows. Lauricella's $F_{D}$ is 

\begin{align*}
F_{D}(a,\bdb,c;x) & =\sum_{{\bf m}=(m_{1},\dots,m_{n})\in\Z_{\geq0}^{n}}\frac{(a)_{|{\bf m}|}(b_{1})_{m_{1}}\cdots(b_{n})_{m_{n}}}{(c)_{|{\bf m}|}\prod_{k=1}^{n}m_{k}!}x_{1}^{m_{1}}\cdots x_{n}^{m_{n}}\\
 & =\frac{\G(c)}{\G(a)\G(c-a)}\int_{0}^{1}u^{a-1}(1-u)^{c-a-1}\prod_{i=1}^{n}(1-x_{i}u)^{-b_{i}}du,
\end{align*}
where $(a)_{m}=\G(a+m)/\G(a)$ is the Pochhammer symbol and $|{\bf m}|=m_{1}+\cdots+m_{n}$.
This is a particular case of (\ref{eq:intro-1}) with (\ref{eq:intro-2}),
where $(r,N)=(1,n+3)$, and $(\a_{1},\a_{2},\dots,\a_{n+3})=(\a_{1},a-1,c-a-1,-b_{1},\dots,-b_{n})$,
and 
\begin{align*}
(f_{1},\dots,f_{n+3}) & =(1,u,1-u,1-x_{1}u,\dots,1-x_{n}u)\\
 & =(1,u)\left(\begin{array}{cccccc}
1 & 0 & 1 & 1 & \cdots & 1\\
0 & 1 & -1 & -x_{1} & \cdots & -x_{n}
\end{array}\right).
\end{align*}
The Gauss HGF is the one-variable case of $F_{D}$.

Before the work of Gelfand \cite{Ge86}, Aomoto \cite{Ao75,Ao82}
studied the HGF $F(z)$ using the de Rham theory. In particular, the
Pfaffian system characterizing $F$ was derived by using the de Rham
cohomology group. The de Rham theory in this case is explained as
follows. Fix $z=(z_{ij})$, and write $U(u,z),f_{j}(u,z)$ as $U(u),f_{j}(u)$,
respectively. Let $H_{j}=\{u\in\C^{r}\mid f_{j}(u)=0\},\,j=1,\dots,N,$
be the hyperplanes in $\C^{r}$. Then $\A=\left\{ H_{1},\dots,H_{N}\right\} $
gives a hyperplane arrangement in $\C^{r}$. Let $\cL$ be the local
system on $X=\C^{r}\setminus\cup_{j=1}^{N}H_{j}$ defined by the multivalued
function $U(u)^{-1}$ whose monodromy around $H_{j}$ is $\exp(-2\pi\sqrt{-1}\a_{j})$,
and $\Lc$ be its dual local system. We have the cohomology group
$H^{r}(X,\cL)$ with coefficients in $\cL$ and the homology group
$H_{r}(X,\Lc)$ with coefficients in $\Lc$. Then the integral (\ref{eq:intro-1})
is seen as a realization of the pairing of a class of $H^{r}(X,\cL)$
and a class $[C(z)\otimes U]$ of $H_{r}(X,\Lc)$. The cohomology
groups $H^{*}(X,\cL)$ are represented, by virtue of the comparison
theorem due to Deligne and Grothendieck \cite{Deligne,Griffiths-Harris},
as the cohomology groups of the complex
\begin{equation}
(\W^{\bu}(*\A),\na)=\{0\to\W^{0}(*\A)\overset{\na}{\to}\W^{1}(*\A)\overset{\na}{\to}\cdots\overset{\na}{\to}\W^{r}(*\A)\to0\},\label{eq:intro-3}
\end{equation}
where $\W^{p}(*\A)$ denotes the set of rational differential $p$-forms
on $\C^{r}$ having at most poles on $\A$, and $\na$ denotes the
twisted differential
\[
\na:=d+(d\log U(u))\wedge=d+(\sum_{j}\a_{j}\,d\log f_{j})\wedge.
\]
 The cohomology groups for this complex are called the (twisted) algebraic
de Rham cohomology groups, see \cite{Ao-Kita-book}. The properties
of the function $F$ reflect the topology of the arrangement $\A$
and the nature of parameters $\a$. So the de Rham theoretic aspect
of the Gelfand HGF attracted the attention of many authors \cite{A-K-Terao,Esnaut-V-Sch,orlik-terao}. 

In this paper, we are concerned with the general HGF including that
of confluent type \cite{KHT92,KimK96}(see Section 2 for the definition),
which includes, in a simple case, the integral representations of
Kummer's confluent HGF, Bessel function, Hermite-Weber function and
Airy function. For example, Kummer's function is given by

\[
_{1}F_{1}(a,c;x)=\sum_{m=0}^{\infty}\frac{(a)_{m}}{(c)_{m}m!}x^{m}=\frac{\G(c)}{\G(a)\G(c-a)}\int_{0}^{1}u^{a-1}(1-u)^{c-a-1}e^{xu}du,
\]
where we see the exponential function $e^{xu}$ of $u$ in the integrand. 

As is explained in Section 2, the general HGF, defined by an $r$-dimensional
integral (\ref{eq:intro-1}) whose integrand $U(u,z)$ is determined
by the matrix $z$ which lives in a Zariski open subset $Z\subset\mat{r+1,N}$
called the generic stratum (Definition \ref{def:hyp-2} ). The integrand
$U(u,z)$ has the form
\begin{equation}
U(u,z)=f_{1}(u,z)^{\a_{1}}\cdots f_{\ell}(u,z)^{\a_{\ell}}e^{g(u,z)},\label{eq:intro-4}
\end{equation}
where $f_{1},\dots,f_{\ell}$ are degree-one polynomial functions
of $u=(u_{1},\dots,u_{r})$ and give the hyperplane arrangement $\A=\left\{ H_{1},\dots,H_{\ell}\right\} $
in $\C^{r}$, and $g$ is a rational function of $u$ having poles
at most in $\cup_{j=1}^{\ell}H_{j}$. The functions $f_{1},\dots,f_{\ell}$
and $g$ are defined by the data $z\in Z$. 

In this case, the integral (\ref{eq:intro-1}) with (\ref{eq:intro-4})
can also be understood in the framework of the de Rham theory. Take
any point $z\in Z$ and fix it. For the cohomology, the power function
part $f_{1}^{-\a_{1}}\cdots f_{\ell}^{-\a_{\ell}}$ of $U(u)^{-1}$
defines the local system $\cL$ on $X:=\C^{r}\setminus\cup_{j=1}^{\ell}H_{j}$,
and the exponential part $e^{-g(u)}$ of $U(u)^{-1}$defines the family
$\Phi$ of closed subsets of $X$, called the family of support, on
which $e^{-g(u)}$ is rapidly decreasing. Then the cohomology for
(\ref{eq:intro-4}) is $H_{\Phi}^{\bullet}(X,\cL)$ which is called
the cohomology of $X$ with coefficients in the local system $\cL$
and with the family of supports $\Phi$, see \cite{pham1} p40 and
\cite{sabbah} p111 for the definition. It is shown \cite{sabbah}
that this cohomology group is isomorphic to the algebraic de Rham
cohomology defined by the (twisted) algebraic de Rham complex (\ref{eq:intro-3})
with the twisted differential
\[
\na=d+(d\log U)\wedge=d+(\sum_{j}\a_{j}\,d\log f_{j}+dg)\wedge.
\]

We are interested in the explicit computation of this algebraic de
Rham cohomology group for the general hypergeometric integral (HGI),
and we want to know, for example, its purity, the rank of top cohomology
group and explicit form of its basis.

We show in Theorem \ref{thm:main} that the de Rham cohomology groups
for the $r$-dimensional HGI have the exterior power structure at
particular points of $Z$ called Veronese points (Definition \ref{def:vero-3}).
It means, roughly speaking, that the cohomology groups for the $r$-dimensional
HGI can be expressed as the exterior product of the cohomology groups
for the one-dimensional HGI. As a result, we see that the purity of
the cohomology holds, namely $H^{p}(\W^{\bu}(*\A),\na)=0$ for $p\neq r$,
and that $\dim_{\C}H^{r}(\W^{\bu}(*\A),\na)=\binom{N-2}{r}$ at any
Veronese point. Moreover, at such point, we can construct a basis
of $H^{r}(\W^{\bu}(*\A),\na)$ explicitly from any basis of the top
cohomology group for the one-dimensional HGI (Propositions \ref{prop:exp-basis-1},
\ref{prop:Basis-4}). Note that the $r$-forms thus constructed at
a Veronese point are linearly independent in $H^{r}(\W^{\bu}(*\A),\na)$
for any $z$ belonging to some Zariski open subset of $Z\subset\mat{r+1,N}$
since the condition for the linear dependency gives a Zariski closed
condition for $z$. We expect that these $r$-forms give a basis of
$H^{r}(\W^{\bu}(*\A),\na)$ at any point $z\in Z$ and that these
$r$-forms will play important roles in the theory of general HGF,
for example, in the explicit computation of the cohomological intersection
numbers and its application, see \cite{Irina-Ki-Na,kimura-taneda,Kita-Yoshida}.
We mension that the exterior power structure for the Gelfand HGF (\ref{eq:intro-1}),
namely non-confluent case of the general HGF, was discussed  by Iwasaki
and Kita in \cite{iwa-kita}, see also \cite{tera}. We follow the
idea of \cite{iwa-kita} for the proof of the main theorem.

This paper is organized as follows. In Section 2, we explain breafly
the definition of the general HGF. The main theorem (Theorem \ref{thm:main})
is stated in Section 3 after defining the (generalized) Veronese map.
The proof of the main theorem is given in Section 4. Using the main
theorem, we give the explicit choice of the basis of the de Rham cohomology
$H^{r}(\W^{\bu}(*\A),\na)$ for a Veronese point in Section \ref{sec:Explicit-basis}.
We also mension some conjecture on the choice of the bases of the
cohomology group for any $z\in Z$. The supporting fact is the holonomicity
and non-existence of the singularity of the system of differential
equations on $Z$ satisfied by $F(z)$. We give an elementary proof
of this fact in Appendix.

\section{General HGF}

In this section we review briefly the definition of the general HGF. 

\subsection{Maximal abelian groups}

Let $N$ be a positive integer and $\l=(n_{1},\dots,n_{\ell})$ be
a partition of $N.$ With the partition $\l$, we associate the maximal
abelian subgroup $H_{\l}$ of $\GL N$:
\begin{equation}
H_{\l}=J(n_{1})\times\cdots\times J(n_{\ell}),\label{eq:hyp-0}
\end{equation}
 where 
\[
J(n):=\left\{ h=\sum_{0\leq i<n}h_{i}\La^{i}\ \mid\ h_{i}\in\C,\ \ h_{0}\neq0\right\} \subset\GL n,
\]
$\Lambda:=\La_{n}=(\de_{i+1,j})_{1\leq i,j\leq n}$ being the shift
matrix of size $n$. In (\ref{eq:hyp-0}), $(h^{(1)},\dots,h^{(\ell)})\in J(n_{1})\times\cdots\times J(n_{\ell})$
is identified with the block diagonal matrix $\diag(h^{(1)},\dots,h^{(\ell)})\in\GL N$.
Note that $\dim_{\C}J(n)=n$, and hence $\dim_{\C}H_{\l}=N$. 
\begin{rem}
\label{rem:hyp-4} 1) $J(n)$ is the centralizer of an element 
\[
C(n,a)=\begin{pmatrix}a & 1\\
 & \ddots & \ddots\\
 &  & \ddots & 1\\
 &  &  & a
\end{pmatrix}\in\GL n.
\]
 2) An element $a\in\GL N$ is called a regular element if the orbit
$O(a)$ of $a$ by the adjoint action is of maximum dimension, namely
$\dim O(a)=N^{2}-N$. $a$ is a regular element if and only if there
is a partition $\l=(n_{1},\dots,n_{\ell})$ of $N$ such that the
Jordan normal form of $a$ is 
\[
a\sim C(n_{1},a_{1})\oplus\cdots\oplus C(n_{\ell},a_{\ell})
\]
with distinct eigenvalues $a_{1},\dots,a_{\ell}$. The centralizer
of this Jordan normal form is $H_{\l}.$ 

3) Let $\C[X]$ be the polynomial ring of an indeterminate $X$ and
let $(X^{n})$ be the ideal of $\C[X]$ generated by $X^{n}$. Then
the Jordan group $J(n)$ is identified with the group of units of
the quotient ring $\C[X]/(X^{n})$ by the correspondence
\[
\sum_{0\leq i<n}h_{i}\La^{i}\mapsto\sum_{0\leq i<n}h_{i}X^{i}.
\]
\end{rem}

\subsection{Character of $H_{\protect\l}$}

The general HGF is defined as the Radon transform of a character of
the universal covering group $\tilde{H}_{\l}$ of $H_{\l}$. So we
describe the characters of $\tilde{H}_{\l}$.

Let $x=(x_{0},x_{1},x_{2},\dots)$ be a sequence of variables and
let $\theta_{m}(x)\ (m\geq0)$ be the function defined by 
\begin{align*}
\sum_{0\leq m<\infty}\theta_{m}(x)T^{m} & =\log(x_{0}+x_{1}T+x_{2}T^{2}+\cdots)\\
 & =\log x_{0}+\log\left(1+\frac{x_{1}}{x_{0}}T+\frac{x_{2}}{x_{0}}T^{2}+\cdots\right)\\
 & =\log x_{0}+\sum_{1\leq k<\infty}\frac{(-1)^{k-1}}{k}\left(\frac{x_{1}}{x_{0}}T+\frac{x_{2}}{x_{0}}T^{2}+\cdots\right)^{k}
\end{align*}
Here $\theta_{0}(x)=\log x_{0}$, and $\theta_{m}(x)$ $(m\geq1)$
is a quasi-homogeneous polynomial of $x_{1}/x_{0},\dots,x_{m}/x_{0}$
of weight $m$ if the weight of $x_{k}/x_{0}$ is defined to be $k$:

\begin{align*}
\theta_{m}(x) & =\sum(-1)^{k_{1}+\cdots+k_{m}-1}\frac{(k_{1}+\cdots+k_{m}-1)!}{k_{1}!\cdots k_{m}!}\left(\frac{x_{1}}{x_{0}}\right)^{k_{1}}\cdots\left(\frac{x_{m}}{x_{0}}\right)^{k_{m}},
\end{align*}
where the sum is taken over the indices $(k_{1},\dots,k_{m})\in\Z_{\geq0}^{m}$
such that $k_{1}+2k_{2}+\cdots+mk_{m}=m$.
\begin{lem}
\cite{GRS88}\label{lemma:hyp-1} The correspondence
\[
h=\sum_{0\leq i<n}h_{i}\La^{i}\mapsto(h_{0},\theta_{1}(h),\dots,\theta_{n-1}(h))
\]
 gives the isomorphism $J(n)\simeq\C^{\times}\times\C^{n-1}$, where
$\C^{n-1}$ is an additive group with the addition of vector space.
\end{lem}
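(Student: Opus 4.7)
The plan is to exploit Remark \ref{rem:hyp-4}(3), which identifies $J(n)$ with the group of units of the truncated polynomial ring $R_n := \C[X]/(X^n)$ via $\sum_i h_i \La^i \mapsto h(X) := \sum_i h_i X^i$. Under this identification, matrix multiplication in $J(n)$ becomes multiplication of truncated polynomials, and the given map becomes
\[
h(X) \mapsto \bigl(h_0, \theta_1(h), \dots, \theta_{n-1}(h)\bigr),
\]
where the $\theta_m$ are precisely the coefficients in the formal expansion of $\log h(T)$ (with the constant term $\log h_0$ separated off as the first factor).

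First I would verify the homomorphism property. Since $\log$ converts products to sums in the formal power series ring $\C[[T]]$ on series with nonzero constant term, one has the identity $\log\bigl(h(T)k(T)\bigr) = \log h(T) + \log k(T)$. Comparing constant terms yields $(hk)_0 = h_0 k_0$, the multiplicative factor in $\cbatu$. Comparing the coefficient of $T^m$ for $1 \le m \le n-1$ gives $\theta_m(hk) = \theta_m(h) + \theta_m(k)$, the additive operation on the $\C^{n-1}$ factor. The crucial point is that $\theta_m$ depends only on $x_0, \dots, x_m$, and $(hk)_i$ for $i \le m < n$ depends only on $h_0, \dots, h_m$ and $k_0, \dots, k_m$, so the coefficients that are killed by the relation $X^n = 0$ never enter the computation. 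Hence the map is a group homomorphism $J(n) \to \cbatu \times \C^{n-1}$.

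For bijectivity I would construct a two-sided inverse directly. Given $(c_0, c_1, \dots, c_{n-1}) \in \cbatu \times \C^{n-1}$, set
\[
h := c_0 \exp\!\left(\sum_{k=1}^{n-1} c_k \La^k\right),
\]
which is well defined because $\La^n = 0$ makes the exponential series terminate, and $h \in J(n)$ since its leading coefficient is $c_0 \ne 0$. Translated to $R_n$, this is $h(X) \equiv c_0 \exp\!\bigl(\sum_k c_k X^k\bigr) \pmod{X^n}$, and the defining generating-function identity for the $\theta_m$'s gives $h_0 = c_0$ and $\theta_m(h) = c_m$ for $1 \le m \le n-1$. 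Conversely, starting from $h \in J(n)$ and setting $c_0 = h_0$, $c_m = \theta_m(h)$ recovers $h$ by the same formula, because $\log$ and $\exp$ are mutually inverse on $1 + T\C[[T]]$ and all relevant identities live in degrees $< n$.

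The only real subtlety is the bookkeeping between the formal-series identities and their truncations modulo $X^n$; once one observes that each $\theta_m$ is a polynomial in $x_1/x_0, \dots, x_m/x_0$ only, the truncation is harmless and the lemma reduces to the classical $\log/\exp$ bijection between $1 + T\C[[T]]/T^n$ and $T\C[[T]]/T^n$, together with the obvious splitting off of the constant term into the $\cbatu$ factor.
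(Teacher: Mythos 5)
Your proof is correct. The paper gives no proof of this lemma (it is quoted from \cite{GRS88}), but your argument --- identifying $J(n)$ with the unit group of $\C[X]/(X^{n})$ via Remark \ref{rem:hyp-4}(3), deducing the homomorphism property from additivity of the formal logarithm together with the observation that $\theta_{m}$ and the product coefficients in degree $\leq m<n$ are unaffected by truncation, and inverting with the terminating exponential $c_{0}\exp(\sum_{k}c_{k}\La^{k})$ --- is precisely the standard $\log/\exp$ argument that the paper implicitly relies on in the subsequent character formula $\chi_{n}(h;\a)=\exp\bigl(\sum_{i}\a_{i}\theta_{i}(h)\bigr)$, and the truncation bookkeeping is handled correctly.
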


The following result is the consequence of the above lemma.
\begin{lem}
Let $\tilde{J}(n)$ be the universal covering group of $J(n)$. Then
a character $\chi_{n}:\tilde{J}(n)\to\cbatu$ is given by
\[
\chi_{n}(h;\a)=\exp\left(\sum_{0\leq i<n}\a_{i}\theta_{i}(h)\right)=h_{0}^{\a_{0}}\exp\left(\sum_{1\leq i<n}\alpha_{i}\theta_{i}(h)\right)
\]
for some complex constants $\a=(\a_{0},\dots,\a_{n-1})$.
\end{lem}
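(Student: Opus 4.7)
The plan is to apply Lemma \ref{lemma:hyp-1} to translate the question about characters of $\tilde{J}(n)$ into the elementary problem of computing the holomorphic characters of an additive complex vector group. All of the real work is in identifying the universal cover explicitly; once that is done, the formula drops out from the classical description of characters of $(\C^{n},+)$.

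First, I would use Lemma \ref{lemma:hyp-1} to identify $J(n) \simeq \C^{\times} \times \C^{n-1}$ via
\[
\Psi : h = \sum_{0 \le i < n} h_{i} \La^{i} \ \longmapsto\ (h_{0},\theta_{1}(h),\dots,\theta_{n-1}(h)).
\]
Since the universal cover of $\C^{\times}$ is $\C$ (with covering map $t \mapsto e^{t}$) and $\C^{n-1}$ is simply connected, the universal cover $\tilde{J}(n)$ becomes, as a complex Lie group, $\C \times \C^{n-1} \cong \C^{n}$ (all factors additive), and under this identification the covering map $p : \C^{n} \to J(n)$ sends $(t_{0},t_{1},\dots,t_{n-1})$ to the unique $h \in J(n)$ satisfying $h_{0} = e^{t_{0}}$ and $\theta_{i}(h) = t_{i}$ for $1 \le i \le n-1$. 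In particular, $t_{0}$ is a single-valued branch on $\tilde{J}(n)$ of the otherwise multi-valued $\log h_{0} = \theta_{0}(h)$, and the remaining $\theta_{i}$ are already single-valued on $J(n)$ (they are polynomials in $h_{1}/h_{0},\dots,h_{i}/h_{0}$).

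Second, I would invoke the standard description of holomorphic characters of an additive complex vector group: any holomorphic homomorphism $\chi : (\C^{n},+) \to \C^{\times}$ lifts through $\exp : \C \to \C^{\times}$ to a holomorphic homomorphism $\C^{n} \to \C$, which is forced to be $\C$-linear. Hence
\[
\chi(t_{0},\dots,t_{n-1}) = \exp\Bigl(\sum_{0 \le i < n} \a_{i} t_{i}\Bigr)
\]
for some (uniquely determined) $\a = (\a_{0},\dots,\a_{n-1}) \in \C^{n}$.

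Finally, I would pull back this formula via $\Psi$. Substituting $t_{0} = \log h_{0}$ and $t_{i} = \theta_{i}(h)$ for $i \ge 1$ yields
\[
\chi_{n}(h;\a) = \exp\Bigl(\sum_{0 \le i < n} \a_{i}\,\theta_{i}(h)\Bigr) = h_{0}^{\a_{0}}\exp\Bigl(\sum_{1 \le i < n} \a_{i}\,\theta_{i}(h)\Bigr),
\]
with the branch of $h_{0}^{\a_{0}} = e^{\a_{0}\log h_{0}}$ fixed by the choice of lift to $\tilde{J}(n)$. The one subtle point — essentially the whole content of the lemma — is that $\log h_{0}$ is ill-defined on $J(n)$ but single-valued on $\tilde{J}(n)$, which is precisely the reason one has to pass to the universal cover in order to allow arbitrary complex exponents $\a_{0}$; everything else is routine verification using Lemma \ref{lemma:hyp-1}.
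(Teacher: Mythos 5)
Your proposal is correct and follows exactly the route the paper intends: the paper gives no written proof beyond the remark that the lemma ``is the consequence of'' Lemma \ref{lemma:hyp-1}, and your argument (pass to the universal cover $\C\times\C^{n-1}\cong\C^{n}$ via the coordinates $(\log h_{0},\theta_{1}(h),\dots,\theta_{n-1}(h))$, note that holomorphic characters of an additive vector group are exponentials of linear functionals, and pull back) is precisely the standard filling-in of that remark. Your observation that passing to $\tilde{J}(n)$ is needed only to make $\log h_{0}$ single-valued is also the right emphasis.
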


Since $H_{\l}$ is a product of $J(n_{k})$, a character of the group
$\tH_{\l}$ is a product of the characters $\chi_{n_{k}}$ of $\tilde{J}(n_{k})$. 
\begin{prop}
Let $\chi_{\l}:\tH_{\l}\to\cbatu$ be a character. Then we have

\begin{equation}
\chi_{\l}(h;\a)=\prod_{1\leq k\leq\ell}\chi_{n_{k}}(h^{(k)};\a^{(k)}),\label{eq:hyp-1}
\end{equation}
for some $\alpha=(\alpha^{(1)},\dots,\alpha^{(\ell)})\in\C^{N},$
$\alpha^{(k)}=(\alpha_{0}^{(k)},\alpha_{1}^{(k)},\dots,\alpha_{n_{k}-1}^{(k)})\in\C^{n_{k}}$,
where $h=(h^{(1)},\cdots,h^{(\ell)})\in\tilde{H}_{\l},\;h^{(k)}\in\tilde{J}(n_{k})$. 
\end{prop}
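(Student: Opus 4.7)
The plan is to reduce the proposition to the preceding lemma on characters of $\tilde{J}(n)$ by exploiting the direct product decomposition built into the definition of $H_\l$. By \eqref{eq:hyp-0}, $H_\l = J(n_1)\times\cdots\times J(n_\ell)$, and Lemma \ref{lemma:hyp-1} identifies each factor with the connected Lie group $\cbatu \times \C^{n_k-1}$. Since the universal covering functor commutes with finite direct products of connected Lie groups, one obtains
\[
\tH_\l \;\simeq\; \tilde{J}(n_1)\times\cdots\times\tilde{J}(n_\ell),
\]
and every $h\in\tH_\l$ is written as $h=(h^{(1)},\dots,h^{(\ell)})$ with $h^{(k)}\in\tilde{J}(n_k)$. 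Through the natural inclusions $\iota_k:\tilde{J}(n_k)\hookrightarrow\tH_\l$ coming from this product structure, one has $h = \iota_1(h^{(1)})\cdots\iota_\ell(h^{(\ell)})$.

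Next, I would use that $\chi_\l$ takes values in the abelian group $\cbatu$, so applying it to the above factorization gives
\[
\chi_\l(h) \;=\; \prod_{k=1}^{\ell}\chi_\l(\iota_k(h^{(k)})).
\]
Each composition $\chi_\l\circ\iota_k:\tilde{J}(n_k)\to\cbatu$ is a character of $\tilde{J}(n_k)$, so the preceding lemma yields parameters $\a^{(k)}=(\a_0^{(k)},\dots,\a_{n_k-1}^{(k)})\in\C^{n_k}$ with $\chi_\l\circ\iota_k = \chi_{n_k}(\,\cdot\,;\a^{(k)})$. Setting $\a=(\a^{(1)},\dots,\a^{(\ell)})\in\C^{N}$ and substituting gives the formula \eqref{eq:hyp-1}.

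The only step requiring justification beyond formal manipulation is the commutation of universal covers with direct products. Rather than appeal to a general functoriality statement, one can read it off Lemma \ref{lemma:hyp-1} directly: the lemma exhibits $J(n_k)\simeq\cbatu\times\C^{n_k-1}$ as an explicit product whose universal cover is manifestly $\C\times\C^{n_k-1}$, and assembling these factors realizes $\tH_\l\simeq\C^{\ell}\times\C^{N-\ell}$ together with the covering map $\tH_\l\to H_\l$. With this identification in hand, the proposition reduces to the elementary fact that a character of a direct product of abelian groups is the product of its restrictions to the factors, so I do not anticipate any genuine obstacle in carrying out the argument.
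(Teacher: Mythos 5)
Your argument is correct and is exactly the route the paper takes: the paper offers only the one-sentence justification that a character of the product $\tH_{\l}\simeq\tilde{J}(n_{1})\times\cdots\times\tilde{J}(n_{\ell})$ is the product of characters of the factors, which you have simply spelled out (universal covers commute with finite products of connected groups, and a homomorphism applied to $h=\iota_{1}(h^{(1)})\cdots\iota_{\ell}(h^{(\ell)})$ factors accordingly, with each $\chi_{\l}\circ\iota_{k}$ identified via the preceding lemma). No discrepancy to report.
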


\subsection{Rough sketch of Radon transform}

Next let us consider the ``Radon transform''. Let $r$ be a positive
integer such that $r+1<N$ and consider the following double fibration
\begin{alignat*}{3}
 &  & \mathrm{F}_{1,r+1}(\C^{N})\\
\pi_{1} & \swarrow &  & \searrow\pi_{2}\\
\Ps(\C^{N}) &  &  &  & \mathrm{G}_{r+1}(\C^{N}),
\end{alignat*}
where $\mathrm{F}_{1,r+1}(\C^{N})$ is the flag manifold:
\[
\mathrm{F}_{1,r+1}(\C^{N})=\{(v_{1},v_{2})\mid v_{1}\subset v_{2}\subset\C^{N}:\text{subspaces,}\dim v_{1}=1,\dim v_{2}=r+1\},
\]
$\Ps(\C^{N})$ is the projective space of one-dimensional linear subspaces
of $\C^{N}$, and $\pi_{1},\pi_{2}$ are projections:
\[
\pi_{i}((v_{1},v_{2}))=v_{i},\quad i=1,2.
\]
Roughly speaking, the Radon trasnform is the following. Suppose that
$f$ is a ``function'' on $\Ps(\C^{N})$ and that $\pi_{1}^{*}f$
is its pullback by $\pi_{1}.$ Take $v\in\gras$ and restrict $\pi_{1}^{*}f$
to the fiber $\pi_{2}^{-1}(v)\simeq\Ps(v)$ which is isomorphic to
$\Ps^{r}$, and integrate it on some $r$-dimensional cycle $C$ of
$\pi_{2}^{-1}(v)$. Then we have the ``function'' $v\mapsto\int_{C}(\pi_{1}^{*}f)\vert_{\pi_{2}^{-1}(v)}\cdot\tau$
on $\gras$, where $\tau$ is the $r$-form explained in Definition
\ref{def:hyp-3}.

In defining the general HGF, we will identify $H_{\l}$ with a Zariski
open subset of the space $\C^{N}$ of the homogeneous coordinates
of $\Ps(\C^{N})$ and take a character $\chi_{\l}$ as $f$ in the
above story, which is regarded as a multivalued function on this Zariski
open set of $\C^{N}$. Here $\C^{N}$ is considered as the vector
space of $N$-dimensional \emph{row }vectors. 

Consider the biholomorphic map

\[
\iota:\ \ H_{\l}\to\prod_{1\leq k\leq\ell}\left(\C^{\times}\times\C^{n_{k}-1}\right)\subset\C^{N}
\]
 defined by 
\[
\iota(h)=(h_{0}^{(1)},\dots,h_{n_{1}-1}^{(1)},\dots,h_{0}^{(\ell)},\dots,h_{n_{\ell}-1}^{(\ell)})
\]
for $h=\bigoplus_{k}\left(\sum_{0\leq j<n_{k}}h_{j}^{(k)}\La_{n_{k}}^{j}\right)\in H_{\l}$.
The map $\iota$ can be lifted to the map from $\tilde{H}_{\l}$ to
$\prod_{1\leq k\leq\ell}\left(\tilde{\C}^{\times}\times\C^{n_{k}-1}\right)$.
This lift is also denoted by $\iota.$ By this map, we regard the
character $\chi_{\l}$ as a multivalued holomorphic function on $\iota(H_{\l})\subset\C^{N}$. 

We turn to the description of the fiber $\pi_{2}^{-1}(v)$. Let $\matt{r+1,N}$
denote the set of $(r+1)\times N$ complex matrices of rank $r+1$.
Take $v\in\gras$ and choose $z'_{0},z'_{1},\dots,z'_{r}\in\C^{N}$
so that $v=span_{\C}\{z'_{0},z'_{1},\dots,z'_{r}\}$. If we take another
basis $w'_{0},w'_{1},\dots,w'_{r}$ of $v$, then we have 
\[
z=\left(\begin{array}{c}
z'_{0}\\
\vdots\\
z'_{r}
\end{array}\right),w=\left(\begin{array}{c}
w'_{0}\\
\vdots\\
w'_{r}
\end{array}\right)\in\matt{r+1,N}
\]
which are connected as $w=gz$ by some $g\in\GL{r+1}$. So, by the
left action of $\GL{r+1}$ on $\matt{r+1,N}$, we have the identification
$\GL{r+1}\backslash\matt{r+1,N}\simeq\gras$ by the correspondence
\[
\GL{r+1}\backslash\matt{r+1,N}\ni[z]\mapsto span_{\C}\{z'_{0},z'_{1},\dots,z'_{r}\}\in\gras.
\]
We use freely this identification in what follows. 

Take a point $[z]\in\gras$. Then the fiber $\pi_{2}^{-1}([z])=\Ps([z])$
is the set of one-dimensional subspaces of $span_{\C}\{z'_{0},z'_{1},\dots,z'_{r}\}$
and a basis of the one-dimesional subspace is given by
\[
t_{0}z'_{0}+\cdots+t_{r}z'_{r}=(t_{0},\dots,t_{r})z=tz\in\C^{N}
\]
with a non-zero vector $t=(t_{0,}t_{1},\dots,t_{r})\in\C^{r+1}$.
We regard $t$ as the homogeneous coordinates of $\Ps^{r}\simeq(\C^{r+1}\setminus\{0\})\slash\cbatu$,
then the correspondence $[t]\mapsto span_{\C}\{tz\}$ gives the identification
$\Ps^{r}\simeq\Ps([z])$. The space of homogeneous coordinates $t$
will be denoted by $T=\C^{r+1}$. Note that, if we write $z$ as $z=(z_{1},\dots,z_{N})$
with the column vectors $z_{j}$, $tz=(tz_{1},\dots,tz_{N})$ is a
vector whose $j$-th entry $tz_{j}$ is a linear polynomial of $t$.
Hence $\chi_{\l}(\iota^{-1}(tz);\a)$ is the expression of $\pi_{1}^{*}\chi_{\l}\vert_{\pi_{2}^{-1}([z])}$
in terms of the homogeneous coordinates $t$ of $\Ps^{r}\simeq\pi_{2}^{-1}([z])$.
It is seen from the explicit form (\ref{eq:hyp-1}) of $\chi_{\l}$
that $\chi_{\l}(\iota^{-1}(tz);\a)$, constructed by substituting
linear polynomials $tz=(tz_{1},\dots,tz_{N})$ into the character
$\chi_{\l}(\cdot\,;\a)$, is a homogeneous function of $t$ of degree
$\a_{0}^{(1)}+\cdots+\a_{0}^{(\ell)}$:
\[
\chi_{\l}(\iota^{-1}((ct)z);\a)=c^{\a_{0}^{(1)}+\cdots+\a_{0}^{(\ell)}}\chi_{\l}(\iota^{-1}(tz);\a),\quad\forall c\in\cbatu.
\]
For the character $\chi_{\l}(\cdot\,;\a)$, we assume the conditions
\begin{equation}
\sum_{1\leq k\leq\ell}\alpha_{0}^{(k)}=-r-1\label{eq:hyp-2}
\end{equation}
 and 
\begin{equation}
\a_{n_{k}-1}^{(k)}\begin{cases}
\notin\Z & \text{if}\;n_{k}=1,\\
\neq0 & \text{if}\;n_{k}>1.
\end{cases}\label{eq:hyp-2-1}
\end{equation}
The condition (\ref{eq:hyp-2}) implies that $\chi_{\l}(\iota^{-1}(tz);\a)$
is regarded as a multivalued global section of the sheaf $\cO(-r-1)$
on $\Ps^{r}$.

\subsection{\label{subsec:general-HGF}The general HGF}

For the given partition $\l$, let us define the space of coefficients
$z$ of linear polynomials of $t$, which will serve as the domain
of definition of the general HGF.

We sometimes identify a partition $\l=(n_{1},\dots,n_{\ell})$ with
the Young diagram which is obtained by arraying boxes, $n_{1}$ boxes
in the first row, $n_{2}$ boxes in the second row, and so on, as
is illustrated in Figure 2.1. A sequence $\mu=(m_{1},\dots,m_{\ell})\in\Z_{\geq0}^{\ell}$
is called a \emph{subdiagram} of $\l$ if it satisfies $0\leq m_{k}\leq n_{k}\;(1\leq k\leq\ell)$
and this condition is expressed as $\mu\subset\l$. The sum $|\mu|=m_{1}+\cdots+m_{\ell}$
is called the weight of $\mu$. See Figure 2.1. For a given $z=(z^{(1)},\dots,z^{(\ell)})\in\matt{r+1,N}$
with $z^{(k)}=(z_{0}^{(k)},\dots,z_{n_{k}-1}^{(k)})\in\mat{r+1,n_{k}}$
and for any subdiagram $\mu\subset\l,|\mu|=r+1$, we put 
\[
z_{\mu}=(z_{0}^{(1)},\dots,z_{m_{1}-1}^{(1)},\dots,z_{0}^{(\ell)},\dots,z_{m_{\ell}-1}^{(\ell)})\in\mat{r+1}.
\]

\begin{defn}
\label{def:hyp-2} The subset $Z_{r+1}\subset\mat{r+1,N}$, defined
by
\[
\zrn=\{z\in\mat{r+1,N}\ \mid\ \det z_{\mu}\neq0\ \ \mbox{for any}\ \ \mu\subset\l,|\mu|=r+1\},
\]
is called the \emph{generic stratum} with respect to $H_{\l}$.
\end{defn}

\begin{figure}
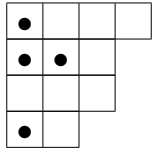

\label{young}
\[
\young(\bullet\;\;\;,\bullet\bullet\;,\;\;\;,\bullet\;)
\]
\caption{Subdiagram $\mu=(1,2,0,1)$ in $\lambda=(4,3,3,2)$}

\end{figure}

In the space $T$, we consider the $r$-form:
\[
\tau=\sum_{0\leq i\leq r}(-1)^{i}dt_{0}\wedge\cdots\wedge\widehat{dt_{i}}\wedge\cdots\wedge dt_{r}.
\]
 In the chart $\{[t]\in\Ps^{r}\mid t_{0}\neq0\}$, we take the affine
coordinates $u_{1},\dots,u_{r}$ by $u_{i}=t_{i}/t_{0}$. Then we
have 
\[
\tau=(t_{0})^{r+1}du_{1}\wedge\cdots\wedge du_{r},
\]
and, by virtue of the assumption (\ref{eq:hyp-2}), we see that 
\[
\chi_{\l}(\iota^{-1}(tz);\a)\cdot\tau=\chi_{\l}(\iota^{-1}(\vec{u}z);\a)du_{1}\wedge\cdots\wedge du_{r},
\]
where $\vec{u}=(1,u_{1},\dots,u_{r})$. In other charts of $\Ps^{r}$,
we have the similar expressions, and so $\chi_{\l}(\iota^{-1}(tz);\a)\cdot\tau$
defines a multivalued holomorphic $r$-form on $\Ps^{r}$ which has
the ramification divisor $\cup_{1\leq j\leq\ell}H_{j}$ consisting
of the hyperplanes $H_{j}:=\{[t]\in\Ps^{r}\mid tz_{0}^{(j)}=0\}$.
\begin{defn}
\label{def:hyp-3} Under the assumptions (\ref{eq:hyp-2}) and (\ref{eq:hyp-2-1}),
the \textit{general HGF of type} $\l$ is the function of $z\in\zrn$
defined by 
\begin{equation}
F(z,\alpha)=\int_{C(z)}\chi_{\l}(\iota^{-1}(tz);\a)\cdot\tau,\label{GHI}
\end{equation}
where $C(z)$ is an $r$-cycle in $\Ps^{r}\setminus\cup_{1\leq k\le\ell}\{tz_{0}^{(k)}=0\}$
of the homology group defined by the integrand $\chi_{\l}(\iota^{-1}(tz);\a)$. 
\end{defn}

In the following, we simply write $tz$ instead of $\iota^{-1}(tz)$
for the sake of simplicity.
\begin{rem*}
In the notation of introducton, $U(u,z)$ in (\ref{eq:intro-4}) is
related to $\chi_{\l}(\vec{u}z;\a)$, and $f_{1},\dots,f_{\ell},g$
have the form
\[
f_{j}=\vec{u}\cdot z_{0}^{(j)},\quad g=\sum_{k=1}^{\ell}\left(\sum_{j=1}^{n_{k}-1}\a_{j}^{(k)}\theta_{j}(\vec{u}z^{(k)})\right).
\]
The condition (\ref{eq:hyp-2-1}) implies that the local system $\Lc$
has the nontrivial monodromy $\exp(2\pi\sqrt{-1}\a_{0}^{(j)})\neq1$
along $H_{j}$ if $n_{j}=1$, and $g$ is a rational function of $u$
having a pole along $H_{j}$ of order $n_{j}-1$ if $n_{j}>1$.
\end{rem*}

\section{\label{sec:Main-theorem}Main theorem}

\subsection{Twisted algebraic de Rham cohomology\label{subsec:deRham-1}}

For $z\in\zrn$ fixed, we see from the assumption (\ref{eq:hyp-2})
on $\a$ that $\chi(tz;\a)$ is a multivalued section of the sheaf
$\cO_{X}(-r-1)$ on 
\[
X=\Ps^{r}\setminus\left(\cup_{1\leq k\leq\ell}H_{k}\right),\quad H_{k}=\{[t]\in\Ps^{r}\mid tz_{0}^{(k)}=0\}.
\]
Take a global section $(tz_{0}^{(1)})^{r+1}$ of $\cO_{X}(r+1)$,
and consider the multivalued holomorphic function $f=\chi(tz,\a)(tz_{0}^{(1)})^{r+1}$
on $X$. Define the twisted differential 
\[
\na_{\w}=f^{-1}\circ d\circ f=d+\w\wedge,\quad\w=df/f.
\]
Here $\w$ is the rational $1$-form holomorphic on $X.$ On the affine
chart $U=\{[t]\in\Ps^{r}\mid t_{0}\neq0\}$ with the affine coordinates
$(x_{1},\dots.x_{r}),x_{i}=t_{i}/t_{0}$, $\w$ is given by
\begin{equation}
\w=\sum_{k=1}^{\ell}\{(\a_{0}^{(k)}+(r+1)\de_{k1})d\log\theta_{0}(\bdx z^{(k)})+\sum_{1\leq j<n_{k}}\a_{j}^{(k)}d\theta_{j}(\bdx z^{(k)})\},\label{eq:deRham-2}
\end{equation}
where $\bdx=(1,x_{1},\dots.x_{r})$. We see that $\w$ has a pole
of order $n_{k}$ on the hyperplane $H_{k}$. Let $\W^{p}(X)$ be
the set of rational $p$-forms having poles at most on $\cup_{1\leq k\leq\ell}H_{k}$.
Then we have the complex

\[
(\W^{\bu}(X),\na_{\w}):\ \ 0\to\W^{0}(X)\overset{\na_{\w}}{\lto}\W^{1}(X)\overset{\na_{\w}}{\lto}\cdots\overset{\na_{\w}}{\lto}\W^{r}(X)\to0
\]
 which we call the (twisted) \emph{algebraic de Rham complex} associated
with the integral (\ref{GHI}).
\begin{defn}
The cohomology groups of the complex $(\W^{\bu}(X),\nabla_{\w})$: 

\[
\cohom pX{\w}:=\ker\{\nabla_{\w}:\W^{p}(X)\to\W^{p+1}(X)\}/\nabla_{\w}\W^{p-1}(X),\quad p=0,1,\dots
\]
are called the \emph{twisted algebraic de Rham cohomology groups}
(simply, de Rham cohomology groups).
\end{defn}

\subsection{Generalized Veronese map}

We introduce the map $\Phi_{\l}:\znn\to\zrn$ called Veronese map
of type $\l$, which will be used in the statement of Theorem \ref{thm:main}.
First, we treat the simple case $\l=(n)$, and then we give the definition
of $\Phi_{\l}$ for general $\l$ in Definition \ref{def:vero-3}. 

\subsubsection{The case $\protect\l=(n)$}

Let $V$ be a complex vector space of $\dim V=2$ and let $R=\C[T]/(T^{n})$
be the quotient ring by the ideal $(T^{n})$ of $\C[T]$ generated
by $T^{n}$. Put $W:=V\otimes_{\C}R$; then $W$ is a free $R$-module
of rank $2$ as well as a left $\mathrm{GL}(V)$-module by the action
$g\cdot(v\otimes h)=(gv)\otimes h,$ where $g\in\mathrm{GL}(V),v\in V$
and $h\in R.$ The module $W$ and the ring $R$ above are also denoted
as $W_{n}$ and $R_{n}$, respectively, when it is necessary to emphasize
their dependence on $n$.

Let $S^{r}(W)$ be the symmetric tensor product of $W$ as $R$-module.
Since $S^{r}(W)\simeq S^{r}(V)\otimes_{\C}R,$ $S^{r}(W)$ is a free
$R$-module of rank $r+1.$ Also $S^{r}(W)$ is endowed with the structure
of left $\mathrm{GL}(V)$-module induced from that for $W.$
\begin{defn}
The $(\mathrm{GL}(V),R_{n})$-equivariant map $\Phi:W\to S^{r}(W)$
is defined by
\[
w\mapsto\Phi(w)=\overbrace{w\otimes\cdots\otimes w}^{r},
\]
and is called\emph{ }\textit{\emph{the }}\textit{(generalized) Veronese
map}\textit{\emph{.}}\textit{ }\textit{\emph{Sometimes we write this
map $\Phi$ as $\Phi_{n}$ in order to emphasize the dependence on}}\textit{
$n.$} 
\end{defn}

Let us write down the Veronese map $\Phi_{n}$ using the $\C$-bases
of $W$ and $S^{r}(W)$. Let $e_{0},e_{1}$ be a basis of $V$ by
which we identify $V$ with $\C^{2}.$ Since $W=V\otimes R_{n}$,
using its $\C$-basis $e_{i}\otimes T^{j}\;(i=0,1;0\leq j<n)$, we
can identify $W$ with $\mat{2,n}$ as a $\C$-vector space by the
correspondence 
\[
W\ni w=\sum_{i=0,1}\sum_{0\leq j<n}w_{ij}e_{i}\otimes T^{j}\mapsto(w_{ij})\in\mat{2,n}.
\]
Similarly we identify $S^{r}(W)$ with $\mat{r+1,n}$ as a $\C$-vector
space. For this, take a basis $\bde_{0},\dots,\bde_{r}$ of $S^{r}(V)$
defined by 
\[
\bde_{k}=\sum_{i_{1}+\cdots+i_{r}=k}e_{i_{1}}\otimes\cdots\otimes e_{i_{r}},
\]
and identify $S^{r}(V)$ with $\C^{r+1}$. Hence, using the basis
$\bde_{k}\otimes T^{j}$ of $S^{r}(W)\simeq S^{r}(V)\otimes R_{n}$,
we can identify $S^{r}(W)$ with $\mat{r+1,n}$ by the correspondence

\[
S^{r}(W)\ni z=\sum_{0\leq i\leq r}\sum_{0\leq j<n}z_{ij}\bde_{i}\otimes T^{j}\mapsto(z_{ij})\in\mat{r+1,n}.
\]
 For $w\in W,$ we put

\[
\Phi_{n}(w)=\overbrace{w\otimes\cdots\otimes w}^{r}=\sum_{0\leq i\leq r}\sum_{0\leq j<n}\varphi_{ij}(w)\bde_{i}\otimes T^{j}.
\]
 It is easily seen that the explicit form of the polynomials $\varphi_{ij}(w)$
is given by 
\[
\varphi_{ij}(w)=\sum w_{0j_{1}}\cdots w_{0j_{r-i}}w_{1j_{r-i+1}}\cdots w_{1j_{r}},
\]
 where the sum is taken over all the indices $(j_{1},\dots,j_{r})$
satisfying $0\leq j_{p}<n$ and $j_{1}+\cdots+j_{r}=j.$ Therefore
$\varphi_{ij}(w)$ are homogeneous polynomials of $w\in\mat{2,n}$
of degree $r$. Then the map
\[
\mat{2,n}\ni w\mapsto(\varphi_{ij}(w))\in\mat{r+1,n}
\]
 gives the expression of the map $\Phi_{n}$ with respect to the $\C$-bases
of $W$ and $S^{r}(W)$.
\begin{rem}
When $n=1,$ the map $\Phi$ is given, in terms of coordinates, by
\[
\C^{2}\ni{}^{t}(w_{0},w_{1})\mapsto{}^{t}(w_{0}^{r},w_{0}^{r-1}w_{1},\dots,w_{0}w_{1}^{r-1},w_{1}^{r})\in\C^{r+1}
\]
and induces the map $\Ps^{1}\to\Ps^{r}$ which coincides with the
Veronese embedding in the usual sense. 
\end{rem}

The reason why we introduced the Veronese map $\Phi_{n}$ is that
Proposition \ref{prop:vero-7} below holds. Take $z=(z_{0},\dots,z_{n-1})\in\mat{2,n}$
and define the polynomials $l_{k}(x)$ in $x$ by 
\[
l_{k}(x):=(1,x)z_{k}=z_{0k}+xz_{1k},\quad0\leq k<n.
\]
Let $x_{1},\dots,x_{r}$ be $r$ copies of the variable $x$ and let
$E_{i}$ be the $i$-th elementary symmetric function of $x_{1},\dots,x_{r}.$
Taking another variables $y=(y_{1},\dots,y_{r})$ and put $\bdy=(1,y_{1},\dots y_{r})$.
The following proposition follows from the definition of the Veronese
map $\Phi=\Phi_{n}$.
\begin{prop}
\label{prop:vero-7} Let $L_{j}(y)\;(0\leq j<n)$ be the polynomials
of degree $1$ defined by 
\[
L_{j}(y)=\bdy\Phi_{n}(z)_{j},
\]
where $\Phi_{n}(z)_{j}$ is the $j$-th column vector of the Veronese
image $\Phi_{n}(z)$ of $z\in\mat{2,n}$. If the variables $y$ are
connected with $x_{1},\dots,x_{n}$ by $y_{i}=E_{i}(x_{1},\dots,x_{n})$,
then we have the identity 
\[
\sum_{0\leq j<n}L_{j}(y)T^{j}\equiv\prod_{1\leq i\leq r}\{l_{0}(x_{i})+l_{1}(x_{i})T+\cdots+l_{n-1}(x_{i})T^{n-1}\}\quad\mathrm{mod}\,(T^{n}).
\]
\end{prop}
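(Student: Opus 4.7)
The plan is to unwind the definition of $\Phi_{n}$ and verify the identity as a polynomial congruence in $T$ modulo $T^{n}$ with coefficients in $\C[x_{1},\dots,x_{r}]$. Identifying $W=V\otimes R_{n}$, any $z\in\mat{2,n}$ may be written
\[
z=e_{0}\otimes A(T)+e_{1}\otimes B(T),\qquad A(T)=\sum_{k=0}^{n-1}z_{0k}T^{k},\quad B(T)=\sum_{k=0}^{n-1}z_{1k}T^{k},
\]
with $A,B\in R_{n}$. Since $z^{\otimes r}$ is already symmetric, grouping the $r$-fold tensor product by the number $k$ of factors equal to $e_{1}$ and recognising the resulting sum over orderings as $\bde_{k}=\sum_{i_{1}+\cdots+i_{r}=k}e_{i_{1}}\otimes\cdots\otimes e_{i_{r}}$ yields
\[
\Phi_{n}(z)=z^{\otimes r}=\sum_{k=0}^{r}\bde_{k}\otimes A(T)^{r-k}B(T)^{k}\in S^{r}(V)\otimes R_{n}.
\]
Comparing with the expression $\Phi_{n}(z)=\sum_{i,j}\varphi_{ij}(z)\,\bde_{i}\otimes T^{j}$ gives $\varphi_{kj}(z)=[T^{j}]\,A(T)^{r-k}B(T)^{k}$ (which also recovers the explicit formula for $\varphi_{ij}$ stated earlier) and, equivalently, $\sum_{j=0}^{n-1}\varphi_{kj}(z)\,T^{j}\equiv A(T)^{r-k}B(T)^{k}\pmod{T^{n}}$.

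Next, from $L_{j}(y)=\bdy\,\Phi_{n}(z)_{j}=\sum_{k=0}^{r}y_{k}\,\varphi_{kj}(z)$ (with the convention $y_{0}=1$), interchanging the order of summation produces
\[
\sum_{0\leq j<n}L_{j}(y)\,T^{j}\equiv\sum_{k=0}^{r}y_{k}\,A(T)^{r-k}B(T)^{k}\pmod{T^{n}}.
\]
On the right-hand side of the proposition, since $l_{k}(x)=z_{0k}+xz_{1k}$ gives $\sum_{k=0}^{n-1}l_{k}(x_{i})\,T^{k}=A(T)+x_{i}B(T)$, expanding the product and collecting the $A^{r-k}B^{k}$ terms produces
\[
\prod_{i=1}^{r}\bigl(A(T)+x_{i}B(T)\bigr)=\sum_{k=0}^{r}E_{k}(x_{1},\dots,x_{r})\,A(T)^{r-k}B(T)^{k},
\]
the coefficient being the $k$-th elementary symmetric polynomial of $x_{1},\dots,x_{r}$. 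Substituting $y_{k}=E_{k}(x_{1},\dots,x_{r})$ matches the two expansions coefficient-by-coefficient in $A^{r-k}B^{k}$, which completes the proof.

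The only delicate point is the combinatorial bookkeeping in the expansion of $z^{\otimes r}$: one must check that the normalisation $\bde_{k}=\sum_{i_{1}+\cdots+i_{r}=k}e_{i_{1}}\otimes\cdots\otimes e_{i_{r}}$ is precisely what makes the coefficient of $\bde_{k}$ come out to $A^{r-k}B^{k}$ without any stray multinomial factors. Everything after that is routine polynomial algebra in $\C[x_{1},\dots,x_{r}][T]/(T^{n})$.
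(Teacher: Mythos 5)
Your proof is correct, and it is essentially the argument the paper has in mind: the paper offers no written proof, stating only that the proposition ``follows from the definition of the Veronese map,'' and what it intends is exactly your computation --- writing $z=e_{0}\otimes A(T)+e_{1}\otimes B(T)$, observing that $z^{\otimes r}=\sum_{k}A^{r-k}B^{k}\,\bde_{k}$ with no multinomial factors because $\bde_{k}$ already sums over all $\binom{r}{k}$ orderings, and matching this against $\prod_{i}(A+x_{i}B)=\sum_{k}E_{k}(x_{1},\dots,x_{r})A^{r-k}B^{k}$ in $\C[x][T]/(T^{n})$. You also correctly read the statement's $E_{i}(x_{1},\dots,x_{n})$ as the elementary symmetric polynomials in the $r$ variables $x_{1},\dots,x_{r}$, which is what the surrounding text defines.
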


\subsubsection{The case $\protect\l=(n_{1},\dots,n_{\ell})$}

We introduce $\Phi_{\l}$ for the partition $\l=(n_{1},\dots,n_{\ell})$
of $N$. Put $W_{\l}=W_{n_{1}}\oplus\cdots\oplus W_{n_{\ell}}$ and
$R_{\l}=R_{n_{1}}\times\cdots\times R_{n_{\ell}}$, where $W_{n}=V\otimes R_{n}$.
Then $W_{\l}$ is considered as a left $\mathrm{GL}(V)$-module as
well as an $R_{\l}$-module.
\begin{defn}
\label{def:vero-3}Define the map 
\[
\Phi_{\l}:W_{\l}\to\oplus_{k}S^{r}(W_{n_{k}})
\]
 by 
\[
\bigl(w^{(1)},\dots,w^{(\ell)}\bigr)\mapsto\bigl(\Phi_{n_{1}}(w^{(1)}),\dots,\Phi_{n_{\ell}}(w^{(\ell)})\bigr).
\]
 This is called the \textit{Veronese map of type $\l$. } 
\end{defn}

Similarly to the case $\Phi_{n}$, the Veronese map $\Phi_{\l}$ induces
the map $\mat{2,N}\to\mat{r+1,N}$ which will also be denoted by $\Phi_{\l}.$
Let $R_{n}^{\times}$ be the group of units in $R_{n}$. Then $R_{\l}^{\times}:=R_{n_{1}}^{\times}\times\cdots\times R_{n_{\ell}}^{\times}$
is the group of units of $R_{\l}$ which acts on $W_{\l}$ in an obvious
way. This action is interpreted as the action of $H_{\l}=J(n_{1})\times\cdots\times J(n_{\ell})$
on $\mat{2,N}$, see Remark \ref{rem:hyp-4}. 

Recall that $Z_{r+1}\subset\matt{r+1,N}$ denotes the generic stratum
with respect to $H_{\l}$ defined in Definition \ref{def:hyp-2}.
\begin{prop}
(\cite{kim2010}, Proposition 5.6) \label{prop:vero-5} The polynomial
map $\Phi_{\l}:\mat{2,N}\to\mat{r+1,N}$ takes the generic stratum
$\znn$ into $\zrn$. 
\end{prop}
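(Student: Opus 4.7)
The plan is to translate the vanishing of $\det \Phi_\l(z)_\mu$ into a statement about a binary form of degree $r$ and its roots in $\Ps^1$, and then derive a contradiction from $|\mu| = r+1$. For brevity, write $z^{(k)} = (z^{(k)}_0, z^{(k)}_1, \dots, z^{(k)}_{n_k-1}) \in \mat{2, n_k}$ with $z^{(k)}_j = {}^t(z^{(k)}_{0,j}, z^{(k)}_{1,j})$, and set $p^{(k)}(T) := \sum_{j<n_k} z^{(k)}_{0,j} T^j$, $q^{(k)}(T) := \sum_{j<n_k} z^{(k)}_{1,j} T^j$, so that the $(i,j)$-entry of $\Phi_{n_k}(z^{(k)})$ equals $[T^j]\, p^{(k)}(T)^{r-i} q^{(k)}(T)^i$ for $0 \leq i \leq r$.

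First I would observe that $\det \Phi_\l(z)_\mu = 0$ is equivalent to the existence of a nonzero row-annihilator $(a_0, \dots, a_r) \in \C^{r+1}$ of the $(r+1) \times (r+1)$ matrix $\Phi_\l(z)_\mu$. Setting the binary form $P(X, Y) := \sum_{i=0}^r a_i X^{r-i} Y^i \neq 0$, this annihilation condition unpacks as $P(p^{(k)}(T), q^{(k)}(T)) \equiv 0 \pmod{T^{m_k}}$ for every $k = 1, \dots, \ell$, since the row-$i$ entries within the $k$-th block are precisely the $T^j$-coefficients of $p^{(k)}(T)^{r-i} q^{(k)}(T)^i$.

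Next, I would analyze the order of vanishing under the $\znn$ hypothesis. Factor $P(X, Y) = c \prod_{s=1}^r (\beta_s X - \alpha_s Y)$ with roots $[\alpha_s : \beta_s] \in \Ps^1$. Then $\mathrm{ord}_{T=0}\, P(p^{(k)}, q^{(k)})(T) = \sum_s v_s^{(k)}$ where $v_s^{(k)} := \mathrm{ord}_{T=0}(\beta_s p^{(k)}(T) - \alpha_s q^{(k)}(T))$, and the $T^j$-coefficient of this linear combination is $\beta_s z^{(k)}_{0,j} - \alpha_s z^{(k)}_{1,j}$. Since $z^{(k)}_0 \neq 0$ for $z \in \znn$, one has $v_s^{(k)} \geq 1$ iff $[\alpha_s:\beta_s] = [z^{(k)}_0]$. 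For $n_k \geq 2$, $v_s^{(k)} \geq 2$ would force $(\alpha_s, \beta_s)$ to be parallel to both $z^{(k)}_0$ and $z^{(k)}_1$, contradicting $\det(z^{(k)}_0, z^{(k)}_1) \neq 0$ coming from the 2-subdiagram with a $2$ in position $k$. Hence $v_s^{(k)} \in \{0,1\}$ and the order of vanishing equals the multiplicity $\nu_k$ of $[z^{(k)}_0]$ as a root of $P$. For $n_k = 1$, $P(p^{(k)}, q^{(k)})$ is the constant $P(z^{(k)}_{0,0}, z^{(k)}_{1,0})$, which vanishes iff $\nu_k \geq 1$. In all cases the divisibility condition is equivalent to the single numerical constraint $\nu_k \geq m_k$.

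Finally, the $\znn$ condition applied to the 2-subdiagrams having a single $1$ in positions $k \neq k'$ yields $\det(z^{(k)}_0, z^{(k')}_0) \neq 0$, so the points $[z^{(1)}_0], \dots, [z^{(\ell)}_0] \in \Ps^1$ are pairwise distinct. Hence $\sum_k \nu_k \leq \deg P = r$, whereas $\sum_k m_k = |\mu| = r+1$; this rules out $\nu_k \geq m_k$ holding simultaneously for all $k$, contradicting $\det \Phi_\l(z)_\mu = 0$. The only mild obstacle in executing this plan is the unified treatment of the degenerate blocks $n_k = 1$ alongside the generic $n_k \geq 2$, but both cases collapse to the same numerical constraint $\nu_k \geq m_k$, making the final counting step uniform.
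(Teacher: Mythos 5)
Your argument is correct and complete. The paper itself gives no proof of this proposition --- it is quoted from \cite{kim2010}, Proposition 5.6 --- so there is no internal proof to compare against; given that reference's subject (Wronskian determinant formulas), the cited proof presumably evaluates $\det\Phi_{\l}(z)_{\mu}$ explicitly as a confluent-Vandermonde-type product of the $2\times2$ minors $\det(z_{0}^{(k)},z_{1}^{(k)})$ and $\det(z_{0}^{(k)},z_{0}^{(k')})$, from which nonvanishing on $\znn$ is immediate. Your route is genuinely different: you avoid any closed determinant formula and instead dualize, converting a left null vector of $\Phi_{\l}(z)_{\mu}$ into a nonzero binary form $P$ of degree $r$ subject to the congruences $P(p^{(k)},q^{(k)})\equiv0\ \mathrm{mod}\,T^{m_{k}}$, and then counting roots of $P$ at the pairwise distinct points $[z_{0}^{(k)}]\in\Ps^{1}$ to reach the contradiction $r\geq\sum_{k}\nu_{k}\geq\sum_{k}m_{k}=r+1$. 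The key coefficient identity $\varphi_{ij}(w)=[T^{j}]\,p(T)^{r-i}q(T)^{i}$ that you use is exactly the generating-function description of $\Phi_{n}$ underlying Proposition \ref{prop:vero-7}, and your use of the $\znn$ hypothesis is exactly where it should be: the weight-$2$ subdiagrams with a $2$ in position $k$ give $v_{s}^{(k)}\leq1$ (the order of vanishing equals the root multiplicity $\nu_{k}$), and those with $1$'s in two positions give the distinctness of the points $[z_{0}^{(k)}]$. Your approach buys a determinant-free, purely multiplicity-theoretic proof; the explicit product formula buys more (the exact value of the minor, useful elsewhere in \cite{kim2010}), but is not needed for the inclusion $\Phi_{\l}(\znn)\subset\zrn$. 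The one place worth tightening is the phrase ``parallel to both $z_{0}^{(k)}$ and $z_{1}^{(k)}$'': the clean statement is that the nonzero functional $v\mapsto\beta_{s}v_{0}-\alpha_{s}v_{1}$ annihilates both columns, forcing $\det(z_{0}^{(k)},z_{1}^{(k)})=0$; this phrasing also covers the a priori possibility $z_{1}^{(k)}=0$ without a separate case.
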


The image $\Phi_{\l}(Z_{2})\subset Z_{r+1}$ is called the \emph{Veronese
image}, and its point is called a \emph{Veronese point}.

\subsection{\label{subsec:Main}Main Theorem}

Let $z\in\znn$ and let $w=\Phi_{\l}(z)\in Z_{r+1}$ be the corresponding
Veronese point. We put 
\begin{equation}
X=\Ps^{1}\setminus\cup_{1\leq k\leq\ell}\{[\bdx]\in\Ps^{1}\mid\bdx z_{0}^{(k)}=0\},\quad Y=\Ps^{r}\setminus\cup_{1\leq k\leq\ell}\{[\bdy]\in\Ps^{r}\mid\bdy w_{0}^{(k)}=0\}.\label{eq:main-0}
\end{equation}
 On $X,$ we have the de Rham complex $(\W^{\bu}(X),\na_{\w})$ with 

\begin{equation}
\w=\sum_{1\leq k\leq\ell}\sum_{0\leq i<n_{k}}\a_{i}^{(k)}d\theta_{i}(\bdx z^{(k)}),\label{eq:main-5}
\end{equation}
 and on $Y$, the de Rham complex $(\W^{\bu}(Y),\na_{\tilde{\w}})$
with 

\begin{equation}
\tilde{\w}=\sum_{1\leq k\leq\ell}\sum_{0\leq i<n_{k}}\a_{i}^{(k)}d\theta_{i}(\bdy w^{(k)}).\label{eq:main-6}
\end{equation}
 We assume that the parameter $\a\in\C^{N}$ satisfies 

\begin{equation}
\a_{0}^{(1)}+\cdots+\a_{0}^{(\ell)}=0,\label{eq:main-1}
\end{equation}
and

\begin{equation}
\a_{n_{k}-1}^{(k)}\begin{cases}
\notin\Z & \text{if}\,n_{k}=1,\\
\neq0 & \text{if}\,n_{k}>1.
\end{cases}\label{eq:main-2}
\end{equation}

Note that we defined the $1$-form $\w$ in Section \ref{subsec:deRham-1}
as (\ref{eq:deRham-2}) for the parameter $\a$ appeared in the definition
of the integral (\ref{GHI}). The parameter $\a$ in this section
is not the same as that of Section \ref{subsec:general-HGF}. Here
we renamed $\a_{0}^{(1)}+2$ as $\a_{0}^{(1)}$, see (\ref{eq:deRham-2}).
Hence the condition (\ref{eq:main-1}) corresponds to (\ref{eq:hyp-2}). 

The following result is about the de Rham cohomology group $\cohom{\bu}X{\w}$. 
\begin{prop}
\label{prop:main-3}(\cite{kim97-1}) For $z\in\znn$, and $\a\in\C^{N}$satisfying
(\ref{eq:main-1}) and (\ref{eq:main-2}), we have 
\end{prop}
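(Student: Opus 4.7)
The statement concerns the one-dimensional case ($r=1$, $z\in\znn$): $X$ is $\Ps^{1}$ with the $\ell$ points $p_k=\{\bdx z_0^{(k)}=0\}$ removed, and at $p_k$ the connection form $\w$ has a pole of order $n_k$ (so irregular when $n_k>1$). From the purity pattern announced in the introduction ($H^p=0$ for $p\neq r$ and $\dim H^r=\binom{N-2}{r}$ at Veronese points, specialized to $r=1$), I expect the proposition to assert that $\cohom pX\w=0$ for $p\neq 1$ and $\dim_{\C}\cohom 1X\w=N-2$. My plan is to treat the two degrees separately and to compute $\dim H^1$ through a pole-order filtration at each puncture combined with the Deligne/Malgrange index count for irregular rank-one connections on a curve.

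For the vanishing of $H^0$, any $f\in\W^0(X)$ with $\na_\w f=0$ satisfies $df=-f\w$, so up to a constant $f$ is a rational realization of a flat section of the dual connection, essentially the inverse of the integrand in (\ref{GHI}). Hypothesis (\ref{eq:main-2}) then forces a contradiction: at each puncture with $n_k=1$ the factor $(\bdx z_0^{(k)})^{\a_0^{(k)}}$ is genuinely multivalued since $\a_0^{(k)}\notin\Z$, and at each puncture with $n_k>1$ the factor $\exp(\a_{n_k-1}^{(k)}\theta_{n_k-1}(\bdx z^{(k)}))$ has an essential singularity since $\a_{n_k-1}^{(k)}\neq 0$. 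Neither is compatible with $f$ being rational on $X$, so $f=0$.

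For $\dim\cohom 1X\w=N-2$, I would choose an affine coordinate on $\Ps^1$ so that one puncture, say $p_\ell$, is at infinity, and perform a partial-fraction decomposition of $\phi\in\W^1(X)$ into a polynomial part plus contributions $\phi_k$ localized at each finite $p_k$. The core claim is that, modulo $\na_\w\W^0(X)$, each $\phi_k$ reduces to a form whose principal part at $p_k$ has length exactly $n_k$, giving $n_k$ independent local parameters. The mechanism is the standard Griffiths-type pole-order reduction: on the filtration by order of pole at $p_k$, the leading symbol of $\na_\w$ is multiplication by (a nonzero multiple of) $\a_{n_k-1}^{(k)}$ for $n_k>1$, and by a shifted exponent $\a_0^{(k)}+m$ for $n_k=1$, both of which are invertible on the relevant graded piece under (\ref{eq:main-2}). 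Summing gives $N$ local parameters; subtracting two for the global constraints (the assumption (\ref{eq:main-1}) $\sum_k\a_0^{(k)}=0$, which expresses the residue compatibility on $\Ps^1$, together with the polynomial-part/infinity matching of the partial-fraction decomposition) yields $N-2$.

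The technical heart, and the principal obstacle, is the irregular local reduction at punctures with $n_k>1$. In this case one must show that the map induced by $\na_\w$ on $\cO(m p_k)/\cO((m-1)p_k)$ is an isomorphism for $m>n_k$ and that the cokernel stabilizes at dimension exactly $n_k$ for $m\le n_k$. This is a finite computation with $d\theta_j(\bdx z^{(k)})$ at $p_k$, but it requires carefully isolating the leading coefficient of $\na_\w$ in the local parameter at $p_k$ and showing that $\a_{n_k-1}^{(k)}\neq 0$ exactly makes this coefficient invertible. Once this local inversion is in hand, gluing the local contributions and identifying the two global relations is a routine linear-algebra step, and the proposition follows.
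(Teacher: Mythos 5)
The paper offers no proof of this proposition at all: it is imported verbatim from \cite{kim97-1}, so there is no internal argument to compare yours against, and your sketch reconstructs what is essentially the standard proof of the $\Ps^{1}$ case. The two pillars are right: $H^{0}$ vanishes because a $\na_{\w}$-flat element of $\W^{0}(X)$ would be a rational realization of $\chi^{-1}$, which (\ref{eq:main-2}) makes either genuinely multivalued (some $\a_{0}^{(k)}\notin\Z$ when $n_{k}=1$) or essentially singular (some $\a_{n_{k}-1}^{(k)}\neq0$ when $n_{k}>1$); and the dimension of $H^{1}$ comes from local pole-order reduction, where the leading coefficient of $\na_{\w}$ on the graded piece of pole order $m$ at $p^{(k)}$ is $\a_{0}^{(k)}-m$ for $n_{k}=1$ and a nonzero multiple of $\a_{n_{k}-1}^{(k)}$ for $n_{k}>1$, both invertible under (\ref{eq:main-2}); the deferred local computation is finite and does go through. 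Two points need tightening. First, the ``polynomial part plus local contributions'' framing does not match the paper's $\W^{1}(X)$, which consists of rational forms with poles \emph{only} at the punctures, hence none at infinity unless a puncture sits there; the cleaner statement is that the reduction lands in the $(N-1)$-dimensional space $\cW$ of Section \ref{sec:Explicit-basis} (global forms with pole order at most $n_{k}$ at $p^{(k)}$, the $-1$ coming from the residue theorem on $\Ps^{1}$), after which $H^{1}\simeq\cW/\C\w$ exactly as in Proposition \ref{prop:Basis-1}. Second, your identification of the two global relations is muddled: one lost dimension is the residue theorem, the other is the exactness $\w=\na_{\w}1$; condition (\ref{eq:main-1}) is not itself one of these relations but is what makes $\chi$ homogeneous of degree $0$, so that $\w$ descends to $\Ps^{1}$ with poles only at the punctures and the whole computation stays inside $\W^{\bu}(X)$. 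With these repairs your count gives $N-2$ as claimed; alternatively, once $H^{0}=0$ is known, the Euler characteristic with irregularity, $(2-\ell)-\sum_{k}(n_{k}-1)=2-N$, which you mention in passing, delivers the dimension in one line.
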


\begin{enumerate}
\item $\cohom pX{\w}=0$ for $p\neq1$,
\item $\dim_{\C}\cohom 1X{\w}=N-2.$
\end{enumerate}
The following theorem is the main result of this paper which describe
the exterior power structure of the cohomology group $\cohom{\bu}Y{\tilde{\w}}$
for a Veronese point.
\begin{thm}
\label{thm:main}Let $z,w,X,Y$ be as above and assume that $\a$
satisfy the conditions (\ref{eq:main-1}) and (\ref{eq:main-2}).
Then 

(1) $Y$ is the symmetric product $S^{r}X$ of $r$ copies of $X$
and 
\[
H^{p}(\W^{\bu}(Y),\na_{\tilde{\w}})\simeq\begin{cases}
\wedge^{r}H^{1}(\W^{\bu}(X),\na_{\w}) & \mbox{if }p=r,\\
0 & \mbox{otherwise},
\end{cases}
\]

(2) $\dim_{\C}H^{r}(\W^{\bu}(Y),\na_{\tilde{\w}})=\binom{N-2}{r}$. 
\end{thm}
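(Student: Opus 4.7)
The plan is to adapt the Iwasaki--Kita strategy: realise $Y$ as the symmetric product $S^{r}X$ through the Veronese map, pull the twisted complex back to $X^{r}$, apply a K\"unneth decomposition together with Proposition \ref{prop:main-3}, and then descend by extracting $\fraks_r$-invariants. Concretely, define
\[
q\colon X^{r}\longrightarrow Y,\qquad (x_1,\dots,x_r)\longmapsto \bdy=(1,E_1(x_1,\dots,x_r),\dots,E_r(x_1,\dots,x_r)),
\]
where $E_i$ is the $i$-th elementary symmetric polynomial. Comparing the coefficients of $T^{0}$ in the identity of Proposition \ref{prop:vero-7}, block by block over $1\le k\le\ell$, yields $\bdy w_0^{(k)}=\prod_{p=1}^{r}\bdx_p z_0^{(k)}$, so $q$ indeed sends $X^{r}$ into $Y$; combined with the classical identification $S^{r}\Ps^{1}=\Ps^{r}$ via elementary symmetric functions, this realises $q$ as the quotient map $X^{r}\to X^{r}/\fraks_r=Y=S^{r}X$, giving the first assertion of (1).

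Taking formal logarithms in the identity of Proposition \ref{prop:vero-7} and comparing coefficients of $T^{m}$ produces the additivity $\theta_m(\bdy w^{(k)})=\sum_{p=1}^{r}\theta_m(\bdx_p z^{(k)})$ after the substitution $y_i=E_i(x)$. Summing over the relevant indices gives
\[
q^{*}\tilde\w\;=\;\sum_{p=1}^{r}\pi_p^{*}\w,\qquad\text{hence}\qquad q^{*}\na_{\tilde\w}\;=\;\boxtimes^{r}\na_{\w},
\]
where $\pi_p\colon X^{r}\to X$ denotes the $p$-th projection. The twisted K\"unneth formula for this external differential then reads
\[
H^{k}\bigl(\W^{\bu}(X^{r}),\boxtimes^{r}\na_{\w}\bigr)\;=\;\bigoplus_{k_1+\cdots+k_r=k}\bigotimes_{p=1}^{r}\cohom{k_p}{X}{\w},
\]
and by Proposition \ref{prop:main-3} only the summand with every $k_p=1$ survives, so the total cohomology is concentrated in degree $r$ and equals $(\cohom{1}{X}{\w})^{\otimes r}$, of dimension $(N-2)^{r}$.

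To descend back to $Y$, in characteristic zero the pullback $q^{*}$ identifies $\cohom{\bu}{Y}{\tilde\w}$ with the $\fraks_r$-invariants of $H^{\bu}(\W^{\bu}(X^{r}),\boxtimes^{r}\na_{\w})$, the averaging $\frac{1}{r!}\sum_{\sigma}\sigma^{*}$ furnishing a natural projector. A direct computation on decomposable top-degree forms,
\[
\sigma^{*}\bigl(\pi_1^{*}\mu_1\wedge\cdots\wedge\pi_r^{*}\mu_r\bigr)\;=\;\sgn{\sigma}\,\pi_1^{*}\mu_{\sigma^{-1}(1)}\wedge\cdots\wedge\pi_r^{*}\mu_{\sigma^{-1}(r)},
\]
shows that the action of $\fraks_r$ on $(\cohom{1}{X}{\w})^{\otimes r}$ is the standard permutation of tensor factors twisted by the sign character; its invariant subspace is therefore the space of alternating tensors $\wedge^{r}\cohom{1}{X}{\w}$, of dimension $\binom{N-2}{r}$. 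Both assertions of the theorem follow.

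The step I expect to be the principal obstacle is making this last descent rigorous in the algebraic de Rham framework. The map $q$ is ramified along the big diagonal of $X^{r}$, and although the diagonal is disjoint from the pulled-back arrangement $\bigcup_{k,p}\pi_p^{-1}(H_k)$, one still has to verify that $q^{*}\colon\W^{\bu}(Y)\to\W^{\bu}(X^{r})^{\fraks_r}$ is an isomorphism of complexes (not merely of sheaves away from the ramification locus) and that averaging commutes strictly with the twisted differential $\boxtimes^{r}\na_{\w}$. This is essentially the algebraic counterpart of acyclicity of finite-group quotients in characteristic zero, but it must be checked against the specific pole-order behaviour dictated by (\ref{eq:deRham-2}) on both $X^{r}$ and $Y$.
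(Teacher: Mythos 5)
Your proposal follows essentially the same route as the paper: identify $Y$ with $S^{r}X$ via the Veronese map, verify $q^{*}\tilde{\w}=\sum_{p}\pi_{p}^{*}\w$ by taking logarithms in Proposition \ref{prop:vero-7}, apply the K\"unneth formula together with Proposition \ref{prop:main-3}, and descend by the averaging projector (the paper's transfer map $tf$), with the sign computation identifying the $\fraks_{r}$-invariants of the tensor power with $\wedge^{r}\cohom 1X{\w}$. The descent step you flag as the principal obstacle is exactly the point the paper itself handles only by appeal to the argument of Iwasaki--Kita, so your account matches the paper's proof in both substance and level of detail.
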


Note that the second assertion of Theorem \ref{thm:main} is a consequence
of the first assertion and Proposition \ref{prop:main-3}. We mention
that the construction of the isomorphism in Theorem \ref{thm:main}
is important in obtaining the explicit form of the bases of the cohomology
group $H^{r}(\W^{\bu}(Y),\na_{\tilde{\w}})$. See Proposition \ref{thm:ext-3}.

\section{\label{sec:Proof-of-theorem}Proof of theorem \ref{thm:main}}

\subsection{K\"unneth formula}

Let $X^{r}$ be the product of $r$ copies of $X$, and let $\pi_{i}:X^{r}\to X$
be the projection to the $i$-th factor. We denote the external product
of the twisted differential $\na_{\w}$ on $X$ by $\boxtimes^{r}\na_{\w}$,
namely,
\[
\boxtimes^{r}\na_{\w}=\pi_{1}^{*}\na_{\w}\otimes\cdots\otimes\pi_{r}^{*}\na_{\w}.
\]

\begin{lem}
The map

\[
\kappa:\otimes^{r}\cohom 1X{\w}\lto\cohomr
\]
 defined by
\[
\kappa(\f_{1}\otimes\cdots\otimes\f_{r})=\pi_{1}^{*}\f_{1}\wedge\cdots\wedge\pi_{r}^{*}\f_{r}
\]
 is an isomorphism of $\C$-vector spaces.
\end{lem}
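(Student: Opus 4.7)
The plan is to prove this lemma as an instance of the twisted algebraic K\"unneth formula, combined with the vanishing from Proposition \ref{prop:main-3}. First I would observe that the algebraic de Rham complex on the product admits a tensor product decomposition. Since $X \subset \Ps^{1}$ is a quasi-affine curve and the divisor removed from $(\Ps^{1})^{r}$ to form $X^{r}$ is a sum of pullback divisors $\pi_{i}^{-1}(H_{k})$, any rational $p$-form on $X^{r}$ with poles controlled along this divisor can be written as a $\C$-linear combination of wedge products $\pi_{1}^{*}\f_{1}\wedge\cdots\wedge\pi_{r}^{*}\f_{r}$ with $\f_{i}\in\W^{\bu}(X)$. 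This gives an isomorphism of graded $\C$-algebras $\W^{\bu}(X^{r})\simeq\bigotimes^{r}\W^{\bu}(X)$. By the very definition of $\boxtimes^{r}\na_{\w}$, under this identification the twisted differential acts as the graded tensor product of the one-factor twisted differentials, i.e.\ $\boxtimes^{r}\na_{\w}=\sum_{i}1\otimes\cdots\otimes\na_{\w}\otimes\cdots\otimes1$ with the usual Koszul signs.

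Second, I would apply the algebraic K\"unneth formula for tensor products of cochain complexes of $\C$-vector spaces. Since we are working over a field, K\"unneth gives an isomorphism
\[
H^{n}\bigl(\otimes^{r}\W^{\bu}(X),\boxtimes^{r}\na_{\w}\bigr)\simeq\bigoplus_{p_{1}+\cdots+p_{r}=n}\cohom{p_{1}}X{\w}\otimes\cdots\otimes\cohom{p_{r}}X{\w},
\]
and the isomorphism is induced precisely by the external product of representative forms. Now Proposition \ref{prop:main-3} asserts that $\cohom pX{\w}=0$ unless $p=1$, so, specializing to $n=r$, only the summand with $p_{1}=\cdots=p_{r}=1$ survives, producing
\[
\cohomr\simeq\otimes^{r}\cohom 1X{\w},
\]
with the isomorphism given by $\kappa$.

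The main obstacle to filling in the details is the first step: one must justify that the full de Rham complex of rational forms on $X^{r}$ with poles along $\cup_{i,k}\pi_{i}^{-1}(H_{k})$ coincides, as a complex, with the tensor product $\bigotimes^{r}\W^{\bu}(X)$. This rests on the observation that a rational function on $(\Ps^{1})^{r}$ whose pole divisor lies in $\cup_{i,k}\pi_{i}^{-1}(H_{k})$ can be expanded as a finite sum of products of rational functions of the individual factors; equivalently, it follows from the localization identity $\C(x_{1},\dots,x_{r})_{\prod_{i,k}(x_{i}-p_{k})}\simeq\bigotimes^{r}\C(x)_{\prod_{k}(x-p_{k})}$ together with the compatibility of the K\"ahler differentials with tensor products of smooth affine algebras over $\C$. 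Once this identification is granted, the remainder of the argument is a purely formal consequence of the K\"unneth theorem for complexes of vector spaces, and the explicit formula for $\kappa$ matches the map produced by that theorem.
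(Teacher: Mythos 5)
Your proposal is correct and follows essentially the same route as the paper, whose entire proof of this lemma is the single sentence ``This is a consequence of the K\"unneth formula.'' You have simply filled in the details that citation leaves implicit --- the identification $\W^{\bu}(X^{r})\simeq\otimes^{r}\W^{\bu}(X)$ compatible with $\boxtimes^{r}\na_{\w}$, and the isolation of the $p_{1}=\cdots=p_{r}=1$ summand (which, since $X$ is a curve, already follows from $\W^{p}(X)=0$ for $p\geq2$ even without invoking Proposition \ref{prop:main-3}).
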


\begin{proof}
This is a consequence of the K\"unneth formula. 
\end{proof}
The symmetric group $\mathfrak{S}_{r}$ acts on $X^{r}$ by 
\[
\s\cdot(x_{1},\dots,x_{r})=(x_{\s^{-1}(1)},\dots,x_{\s^{-1}(r)}).
\]
The quotient space $S^{r}X:=\mathfrak{S}_{r}\backslash X^{r}$ is
the $r$-th symmetric product of $X$. Let $\pi:X^{r}\to S^{r}X$
be the natural projection. We prove the first part of the theorem. 
\begin{prop}
Let $w=\Phi_{\l}(z)\in\zrn$ be the Veronese image of $z\in\znn$
and let $Y=\Ps^{r}\setminus\cup_{1\leq k\leq\ell}\{[t]\in\Ps^{r}\mid\,tw_{0}^{(k)}=0\}$.
Then $Y=S^{r}X$.
\end{prop}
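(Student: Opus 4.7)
The strategy is to first realize the classical isomorphism $S^{r}\Ps^{1}\simeq\Ps^{r}$ through elementary symmetric polynomials, and then invoke Proposition \ref{prop:vero-7} to check that the divisors removed to form $Y$ match, under this isomorphism, the symmetric images of those removed to form $X$.

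For the identification, I would send an ordered tuple $([\bdx_{1}],\dots,[\bdx_{r}])\in(\Ps^{1})^{r}$ (with $\bdx_{i}=(1,x_{i})$ in the standard affine chart of $\Ps^1$) to the point of $\Ps^{r}$ with affine coordinates $y_{i}=E_{i}(x_{1},\dots,x_{r})$, the $i$-th elementary symmetric polynomial. This map is $\fraks_{r}$-invariant and descends to the classical identification $S^{r}\Ps^{1}\simeq\Ps^{r}$ (equivalently, $\Ps^{r}=\Ps H^{0}(\Ps^{1},\cO(r))$ via the roots of a degree-$r$ binary form); in homogeneous coordinates it extends over the point at infinity of $\Ps^{1}$ without issue, so I need not worry about chart boundaries.

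The key computation is then to apply Proposition \ref{prop:vero-7} separately to each block $z^{(k)}$ of $z=(z^{(1)},\dots,z^{(\ell)})\in\znn$. Setting $T=0$ in the congruence of that proposition yields the identity
\[
L_{0}^{(k)}(y)=\prod_{i=1}^{r}l_{0}^{(k)}(x_{i}),
\]
where $L_{0}^{(k)}(y)=\bdy w_{0}^{(k)}$ with $w=\Phi_{\l}(z)$, and $l_{0}^{(k)}(x_{i})=\bdx_{i}z_{0}^{(k)}$. Consequently, $[\bdy]\in Y$ iff $\bdy w_{0}^{(k)}\neq0$ for every $k$, iff $\bdx_{i}z_{0}^{(k)}\neq0$ for every $i$ and every $k$, iff each $[\bdx_{i}]$ lies in $X$. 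Thus $Y$ is exactly the image of $X^{r}$ under the quotient $(\Ps^{1})^{r}\to S^{r}\Ps^{1}=\Ps^{r}$, i.e.\ $Y=S^{r}X$.

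There is no substantive obstacle here: Proposition \ref{prop:vero-7} supplies precisely the factorization of $L_{0}^{(k)}$ that is needed, and the remainder is bookkeeping with the classical $S^{r}\Ps^{1}\simeq\Ps^{r}$ identification. The only point that must be mentioned is that although Proposition \ref{prop:vero-7} gives congruences modulo higher powers of $T$ (involving $L_{j}^{(k)}$ with $j\geq1$), only the $T^{0}$-term is relevant for determining $Y$, since the removed divisors are cut out solely by the $L_{0}^{(k)}$ on the $\Ps^{r}$-side and by the $l_{0}^{(k)}$ on the $\Ps^{1}$-side.
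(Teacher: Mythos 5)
Your proof is correct and is essentially the paper's argument: both rest on the classical identification $S^{r}\Ps^{1}\simeq\Ps^{r}$ via elementary symmetric functions together with the factorization $\bdy w_{0}^{(k)}=\prod_{i}\bdx_{i}z_{0}^{(k)}$, which you extract as the $T^{0}$-coefficient of Proposition \ref{prop:vero-7} while the paper derives the same identity directly from the tensor pairing $\la\cS(v_{1}\otimes\cdots\otimes v_{r}),(u_{k}^{*})^{\otimes r}\ra=\prod_{i}\la v_{i},u_{k}^{*}\ra$. The remaining steps (surjectivity of $(\Ps^{1})^{r}\to\Ps^{r}$ and the homogeneous extension over the chart at infinity) are handled adequately.
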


\begin{proof}
Let $V$ be a $2$-dimensional vector space over $\C$ and $V^{*}$
be its dual. Let $S^{r}(V)$ and $S^{r}(V^{*})$ be the $r$-th symmetric
tensor products of $V$ and $V^{*}$, respectively. Recall that $S^{r}(V^{*})$
is regarded as the dual of $S^{r}(V)$ by the dual pairing which comes
from that between $V^{\otimes r}$ and $(V^{*})^{\otimes r}$:
\[
\la v_{1}\otimes\cdots\otimes v_{r},v_{1}^{*}\otimes\cdots\otimes v_{r}^{*}\ra=\prod_{1\leq i\leq r}\la v_{i},v_{i}^{*}\ra.
\]
 Let $e_{0},e_{1}$ be a basis of $V$ and let $e_{0}^{*},e_{1}^{*}$
be its dual basis. For $z\in\znn$, put $u_{k}^{*}=z_{00}^{(k)}\est 0+z_{10}^{(k)}\est 1\;(k=1,\dots,\ell)$,
and let $(u_{k}^{*})^{\perp}=\{v\in V\mid\la v,u_{k}^{*}\ra=0\}$.
Then we can write as $X=\Ps(V)\setminus\cup_{1\leq k\leq\ell}(u_{k}^{*})^{\perp}$.
Then the symmetric product $S^{r}X$ can be described as follows.
Let $\cS:V^{\otimes r}\to S^{r}(V)$ be the symmetrization, i.e.,
it is defined by extending linearly

\[
\cS(v_{1}\otimes\cdots\otimes v_{r})=\frac{1}{r!}\sum_{\s\in\fraks_{r}}v_{\s^{-1}(1)}\otimes\cdots\otimes v_{\s^{-1}(r)}.
\]
 Take $v=v_{1}\otimes\cdots\otimes v_{r}=(s_{0}^{1}e_{0}+s_{1}^{1}e_{1})\otimes\cdots\otimes(s_{0}^{r}e_{0}+s_{1}^{r}e_{1})$.
Then 

\[
\cS(v)=\sum_{k=0}^{r}\left(\sum_{j_{1}+\cdots+j_{r}=k}s_{j_{_{1}}}^{1}\cdots s_{j_{r}}^{r}\right)\bde_{k}',\quad\bde_{k}'=\cS(e_{0}^{\otimes(r-k)}\otimes e_{1}^{\otimes k}).
\]
 Thus the map $\cS:V^{\otimes r}\to S^{r}V$ is written in terms of
coordinates as 
\[
t_{k}=\sum_{j_{1}+\cdots+j_{r}=k}s_{j_{_{1}}}^{1}\cdots s_{j_{r}}^{r}.
\]
 Since $v_{1}\otimes\cdots\otimes v_{r}\in\otimes^{r}(V\setminus\cup_{1\leq k\leq\ell}(u_{k}^{*})^{\perp})$
is described by the condition 

\[
\la v_{i},u_{k}^{*}\ra\neq0\quad(1\leq i\leq r,1\leq k\leq\ell),
\]
 it follows that $\la\cS(v_{1}\otimes\cdots\otimes v_{r}),u_{k}^{*}\otimes\cdots\otimes u_{k}^{*}\ra\neq0$.
The left hand side can be written as

\begin{multline*}
\la\cS(v_{1}\otimes\cdots\otimes v_{r}),u_{k}^{*}\otimes\cdots\otimes u_{k}^{*}\ra\\
=t_{0}(z_{00}^{(k)})^{r}+t_{1}(z_{00}^{(k)})^{r-1}z_{10}^{(k)}+\cdots+t_{r}(z_{10}^{(k)})^{r}=tw_{0}^{(k)}.
\end{multline*}
Thus we have 
\[
S^{r}(V\setminus\cup_{1\leq k\leq\ell}(u_{k}^{*})^{\perp})=S^{r}(V)\setminus\cup_{1\leq k\leq\ell}((u_{k}^{*})^{\otimes r})^{\perp}
\]
 and in terms of coordinates

\begin{align*}
S^{r}X & =\Ps(S^{r}(V\setminus\cup_{1\leq k\leq\ell}(u_{k}^{*})^{\perp}))\simeq\Ps(S^{r}(V))\setminus\cup_{1\leq k\leq\ell}((u_{k}^{*})^{\otimes r})^{\perp}\\
 & =\Ps^{r}\setminus\cup_{1\leq k\leq\ell}\{t\in\Ps^{r}\mid tw_{0}^{(k)}=0\}=Y.
\end{align*}
\end{proof}
In terms of coordinates the identification $S^{r}X=Y$ can be given
as follows. Take an affine coordinates $x_{i}=s_{1}^{i}/s_{0}^{i}\;(1\leq i\leq r)$
of $\overbrace{\Ps^{1}\times\cdots\times\Ps^{1}}^{r}$ and take an
affine coordinates of $\Ps^{r}$ as $y_{i}=t_{i}/t_{0}\;(1\leq i\leq r)$
. Then the identifying map can be expressed as $y_{i}=$$i$-th elementary
symmetric function of $x_{1},\dots,x_{r}.$

The action of $\fraks_{r}$ on $X^{r}$ induces its action on the
space $\W^{p}(X^{r})$ of differential forms. Let $\W^{p}(X^{r})^{\fraks_{r}}$
denote the set of elements of $\W^{p}(X^{r})$ invariant by the action
of $\fraks_{r}$. Then, in a similar way as in \cite{iwa-kita}, we
can show the following.
\begin{lem}
Let $\pi:X^{r}\to Y:=S^{r}X$ be the natural projection. Then the
map $\pi^{*}:\W^{p}(Y)\to\W^{p}(X^{r})$ induces an isomorphism $\pi^{*}:\W^{p}(Y)\to\W^{p}(X^{r}){}^{\fraks_{r}}$.
\end{lem}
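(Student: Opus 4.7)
The plan is to establish the surjectivity of $\pi^{*}$ onto the $\fraks_{r}$-invariants, since well-definedness (the image lies in $\W^{p}(X^{r})^{\fraks_{r}}$ by the $\fraks_{r}$-equivariance of $\pi$) and injectivity (as $\pi$ is dominant, so $\pi^{*}$ is injective on rational forms) are both immediate. I would work in the standard affine chart of $Y$ with coordinates $y_{i}=E_{i}(x_{1},\dots,x_{r})$, the $i$-th elementary symmetric polynomial in the affine coordinates $x_{1},\dots,x_{r}$ on $X^{r}$. By Proposition \ref{prop:vero-7}, the defining polynomial of each pole divisor of $\W^{p}(Y)$ pulls back under $\pi$ to the product $\prod_{i=1}^{r}f(x_{i})$ of the corresponding pole polynomials on $X$, so the admissible pole loci on $Y$ and on $X^{r}$ match correctly under $\pi^{*}$.

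Given $\eta\in\W^{p}(X^{r})^{\fraks_{r}}$, observe that $\pi$ restricts to an \'etale Galois cover $X^{r}\setminus\Delta\to Y\setminus\pi(\Delta)$ with Galois group $\fraks_{r}$, where $\Delta=\bigcup_{i<j}\{x_{i}=x_{j}\}$ is the union of the diagonals. By standard Galois descent for \'etale covers there is a unique rational $p$-form $\omega_{0}$ on $Y\setminus\pi(\Delta)$ with $\pi^{*}\omega_{0}=\eta$ outside $\Delta$. Since $Y$ is smooth, $\omega_{0}$ extends as a rational $p$-form $\omega$ on all of $Y$, and by construction its only possible poles lie on the prescribed pole divisors of $\W^{p}(Y)$ together with $\pi(\Delta)$.

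The main obstacle is to rule out spurious poles of $\omega$ along the branch locus $\pi(\Delta)$. It suffices to check this at the generic point of each irreducible component $\pi(\Delta_{ij})$, where $\Delta_{ij}=\{x_{i}=x_{j}\}$. Near such a point $\pi$ factors locally through the $\Z/2\Z$-quotient that interchanges $x_{i}$ and $x_{j}$, and the Jacobian of $\pi$ has a simple zero along $\Delta_{ij}$ (a Vandermonde-type factor). A direct local computation shows that a pole of order $n\geq1$ of $\omega$ along $\pi(\Delta_{ij})$ would force $\eta=\pi^{*}\omega$ to have a pole of positive order along $\Delta_{ij}$, contradicting $\eta\in\W^{p}(X^{r})$ since no $\Delta_{ij}$ is among the allowed pole divisors on $X^{r}$. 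Alternatively, this step can be handled uniformly by invoking Solomon's freeness theorem for the reflection group $\fraks_{r}$ acting on $\C^{r}$: the $\fraks_{r}$-invariant rational forms with poles on an $\fraks_{r}$-stable divisor form a free module over the corresponding ring of invariant rational functions with basis $\{dy_{i_{1}}\wedge\cdots\wedge dy_{i_{p}}\}$, which is exactly the image of $\pi^{*}$. Gluing over the standard affine charts of $Y\subset\Ps^{r}$ then finishes the proof.
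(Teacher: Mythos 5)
Your argument is correct. Note that the paper itself supplies no proof of this lemma --- it only says the statement follows ``in a similar way as in'' Iwasaki--Kita --- and your two-step argument (Galois descent of the $\fraks_{r}$-invariant form off the diagonals, followed by the local $\Z/2\Z$-quotient computation showing that a pole of $\omega$ of order $n\geq1$ along $\pi(\Delta_{ij})$ would force a pole of $\pi^{*}\omega=\eta$ of order $2n$ or $2n-1\geq1$ along $\Delta_{ij}$, which is not an admissible pole divisor of $\W^{p}(X^{r})$; or equivalently the localized form of Solomon's theorem, which applies here because the pole divisor $\prod_{k}\prod_{i}l_{k}(x_{i})=0$ is cut out by an $\fraks_{r}$-invariant polynomial) is precisely the standard argument that reference carries out, so your proposal fills in the omitted details rather than taking a different route.
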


Let $\tilde{\w}$ be as given in (\ref{eq:main-6}). 
\begin{lem}
$\pi^{*}$induces a map

\[
\pi^{*}:\cohom{\bu}Y{\tilde{\w}}\lto H^{\bu}(\W^{\bu}(X^{r}),\boxtimes^{r}\na_{\w})^{\fraks_{r}}.
\]
 
\end{lem}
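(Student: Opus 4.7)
The plan is to establish two things: (a) $\pi^{*}$ is a chain map from $(\W^{\bu}(Y),\na_{\tilde\w})$ to $(\W^{\bu}(X^{r}),\boxtimes^{r}\na_{\w})$, and (b) its image lies in the $\fraks_{r}$-invariant subcomplex. Passing to cohomology then yields the asserted map.

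For (a), the single technical identity to check is $\pi^{*}\tilde\w=\sum_{i=1}^{r}\pi_{i}^{*}\w$. I would derive this from Proposition \ref{prop:vero-7} by taking the logarithm of both sides of the stated congruence modulo $T^{n_{k}}$, for each block $k=1,\dots,\ell$. Since $\theta_{j}$ is by definition the coefficient of $T^{j}$ in $\log(x_{0}+x_{1}T+\cdots)$, the product on the right turns into a sum of logs, and the congruence modulo $T^{n_{k}}$ controls precisely the coefficients $\theta_{0},\dots,\theta_{n_{k}-1}$. This gives
\[
\theta_{j}(\bdy w^{(k)})=\sum_{i=1}^{r}\theta_{j}(\bdx^{(i)}z^{(k)}),\quad 0\leq j<n_{k},
\]
after pullback through the identification $y_{i}=E_{i}(x_{1},\dots,x_{r})$, where $\bdx^{(i)}=(1,x_{i})$ are the coordinates on the $i$-th factor. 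Multiplying by $\a_{j}^{(k)}$, summing over $k$ and $j$, and applying $d$, the formulas (\ref{eq:main-5}) and (\ref{eq:main-6}) yield $\pi^{*}\tilde\w=\sum_{i}\pi_{i}^{*}\w$. For any $\f\in\W^{p}(Y)$ we then compute
\[
\pi^{*}(\na_{\tilde\w}\f)=d\pi^{*}\f+\pi^{*}\tilde\w\wedge\pi^{*}\f=d\pi^{*}\f+\Bigl(\sum_{i}\pi_{i}^{*}\w\Bigr)\wedge\pi^{*}\f=(\boxtimes^{r}\na_{\w})(\pi^{*}\f),
\]
so $\pi^{*}$ intertwines the two twisted differentials.

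For (b), since $\pi\circ\s=\pi$ for every $\s\in\fraks_{r}$, we have $\s^{*}\pi^{*}\f=\pi^{*}\f$, hence $\pi^{*}\W^{p}(Y)\subset\W^{p}(X^{r})^{\fraks_{r}}$. Moreover the $1$-form $\sum_{i}\pi_{i}^{*}\w$ is manifestly $\fraks_{r}$-symmetric, so $\boxtimes^{r}\na_{\w}$ preserves the invariant subcomplex. Combining with (a), $\pi^{*}$ is a morphism of complexes landing in $(\W^{\bu}(X^{r})^{\fraks_{r}},\boxtimes^{r}\na_{\w})$, and therefore descends to a map on cohomology with values in $H^{\bu}(\W^{\bu}(X^{r}),\boxtimes^{r}\na_{\w})^{\fraks_{r}}$.

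The main obstacle is the equality $\pi^{*}\tilde\w=\sum_{i}\pi_{i}^{*}\w$. It is not literally Proposition \ref{prop:vero-7}, which is a polynomial identity mod $T^{n_{k}}$; one must pass to $\log$ and justify that the log-of-product becomes sum-of-logs at the level of the truncated coefficients $\theta_{j}$, $0\leq j<n_{k}$. Everything else (chain-map property, symmetry of $\sum_{i}\pi_{i}^{*}\w$, invariance of $\pi^{*}\f$) is formal once this identity is in hand.
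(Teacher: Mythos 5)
Your proposal is correct and follows essentially the same route as the paper: the whole content is the identity $\pi^{*}\tilde{\w}=\sum_{i}\pi_{i}^{*}\w$, which the paper obtains by writing $\tilde{\w}=d\log\chi(tw;\a)$ and using the multiplicativity $\pi^{*}\chi(tw;\a)=\prod_{i}\chi(s^{i}z;\a)$ coming from Proposition \ref{prop:vero-7}, while you derive the equivalent coefficientwise identity $\theta_{j}(\bdy w^{(k)})=\sum_{i}\theta_{j}(\bdx^{(i)}z^{(k)})$ -- exactly the computation the paper itself carries out later in Lemma \ref{lem:Basis-2}. The invariance step and the passage to cohomology are handled as in the paper.
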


\begin{proof}
It is to be shown that $\pi^{*}:\W^{p}(Y)\to\W^{p}(X^{r})$ defines
a cochain map from the complex $(\W^{\bu}(Y),\na_{\tilde{\w}})$ to
$(\W^{\bu}(X^{r}),\boxtimes^{r}\na_{\w})$. So we show $\pi^{*}\circ\na_{\tilde{\w}}=(\boxtimes^{r}\na_{\w})\circ\pi^{*}$.
Take $\eta\in\W^{p}(Y)$. Then

\[
\pi^{*}\circ\na_{\tilde{\w}}\eta=\pi^{*}(d\eta+\tilde{\w}\wedge\eta)=d(\pi^{*}\eta)+\pi^{*}\tilde{\w}\wedge\pi^{*}\eta.
\]
 Thus it is sufficient to show that $\pi^{*}\tilde{\w}=\sum_{1\leq i\leq r}\pi_{i}^{*}\w$.
This holds because 

\begin{align*}
\pi^{*}\tilde{\w} & =\pi^{*}d\log\chi(tw;\a)=d\log\prod_{1\leq i\leq r}\chi(s^{i}z;\a)\\
 & =\sum_{1\leq i\leq r}d\log\chi(s^{i}z;\a)=\sum_{1\leq i\leq r}\pi_{i}^{*}\w.
\end{align*}
 Since $\pi^{*}\eta$ is invariant under the action of $\fraks_{r}$,
the cohomology class $[\pi^{*}\eta]\in H^{\bu}(\W^{\bu}(X^{r}),\boxtimes^{r}\na_{\w})$
is $\fraks_{r}$-invariant.
\end{proof}

\subsection{Transfer}
\begin{defn}
For $\f\in\W^{\bu}(X^{r})$, put 
\[
\mu(\f)=\frac{1}{r!}\sum_{\s\in\fraks_{r}}\s^{*}\f.
\]
\end{defn}

\begin{lem}
\label{lem:proof-4}(1) $\mu$ is a projection from $\W^{\bu}(X^{r})$
to the subspace $\W^{\bu}(X^{r})^{\fraks_{r}}$. 

(2) $\mu$ defines a chain map from $(\W^{\bu}(X^{r}),\boxtimes^{r}\na_{\w})$
to itself and hence it induces a homomorphism $\mu^{*}:H^{\bu}(\W^{\bu}(X^{r}),\boxtimes^{r}\na_{\w})\to H^{\bu}(\W^{\bu}(X^{r}),\boxtimes^{r}\na_{\w})$
whose image is contained in $H^{\bu}(\W^{\bu}(X^{r}),\boxtimes^{r}\na_{\w})^{\fraks_{r}}$.
\end{lem}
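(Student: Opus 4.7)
For part (1), the plan is to verify the two defining properties of a projection onto the invariant subspace. First I would show that $\mu(\f)$ is always $\fraks_{r}$-invariant: for any $\tau\in\fraks_{r}$, a direct computation gives
\[
\tau^{*}\mu(\f)=\frac{1}{r!}\sum_{\s\in\fraks_{r}}(\s\tau)^{*}\f,
\]
and reindexing by $\s':=\s\tau$ recovers $\mu(\f)$. Second, if $\f\in\W^{\bu}(X^{r})^{\fraks_{r}}$ then every summand $\s^{*}\f$ equals $\f$, so $\mu(\f)=\f$. These two facts together give $\mu^{2}=\mu$ and $\mathrm{Im}\,\mu=\W^{\bu}(X^{r})^{\fraks_{r}}$.

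For part (2), the key point is to show that each $\s^{*}$ commutes with the twisted differential $\boxtimes^{r}\na_{\w}=d+\bigl(\sum_{i=1}^{r}\pi_{i}^{*}\w\bigr)\wedge$, whence the same follows for the average $\mu$. Since $d$ is natural under pullbacks, only the twisting $1$-form requires checking. Using the convention $\s\cdot(x_{1},\dots,x_{r})=(x_{\s^{-1}(1)},\dots,x_{\s^{-1}(r)})$, we have $\pi_{i}\circ\s=\pi_{\s^{-1}(i)}$, so
\[
\s^{*}\Bigl(\sum_{i=1}^{r}\pi_{i}^{*}\w\Bigr)=\sum_{i=1}^{r}\pi_{\s^{-1}(i)}^{*}\w=\sum_{j=1}^{r}\pi_{j}^{*}\w,
\]
and the form $\sum_{i}\pi_{i}^{*}\w$ is already $\fraks_{r}$-invariant. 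Consequently $\s^{*}\circ\boxtimes^{r}\na_{\w}=\boxtimes^{r}\na_{\w}\circ\s^{*}$, and averaging over $\s$ gives $\mu\circ\boxtimes^{r}\na_{\w}=\boxtimes^{r}\na_{\w}\circ\mu$; that is, $\mu$ is a chain map. It therefore descends to $\mu^{*}$ on cohomology, and since $\mu(\f)$ is $\fraks_{r}$-invariant at the cochain level for every $\f$, the cohomology class $[\mu(\f)]$ lies in the invariant part $H^{\bu}(\W^{\bu}(X^{r}),\boxtimes^{r}\na_{\w})^{\fraks_{r}}$.

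There is no real obstacle here; the only point requiring a moment of care is tracking the convention for the $\fraks_{r}$-action (so that the identity $\pi_{i}\circ\s=\pi_{\s^{-1}(i)}$ is applied with the correct orientation) and noting that the symmetric-group action on cohomology is well-defined because each $\s^{*}$ preserves the twisted complex. Once the invariance of $\sum_{i}\pi_{i}^{*}\w$ is in hand, both statements are essentially formal consequences of the standard averaging argument for group actions on cochain complexes over a field of characteristic zero.
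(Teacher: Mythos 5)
Your proof is correct and follows essentially the same route as the paper: averaging plus reindexing for the projection property, and commutation of each $\s^{*}$ with $\boxtimes^{r}\na_{\w}$ via the invariance of $\sum_{i}\pi_{i}^{*}\w$ for the chain-map property. The only difference is that you spell out the identity $\pi_{i}\circ\s=\pi_{\s^{-1}(i)}$ justifying that invariance, which the paper simply asserts.
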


\begin{proof}
To show the assertion (1), it is sufficient to show that $\mu(\W^{\bu}(X^{r}))\subset\W^{\bu}(X^{r})^{\fraks_{r}}$
and $\mu|_{\W^{\bu}(X^{r})^{\fraks_{r}}}=id.$ Let $\f\in\W^{\bu}(X^{r})$,
then

\begin{align*}
\s^{*}(\mu(\f)) & =\s^{*}\frac{1}{r!}\sum_{\tau\in\fraks_{r}}\tau^{*}\f=\frac{1}{r!}\sum_{\tau\in\fraks_{r}}(\tau\s)^{*}\f\\
 & =\frac{1}{r!}\sum_{\tau\in\fraks_{r}}\tau^{*}\f=\mu(\f).
\end{align*}
This shows $\mu(\W^{\bu}(X^{r}))\subset\W^{p}(X^{r}){}^{\fraks_{r}}$
and the same computation shows $\mu|_{\W^{\bu}(X^{r})^{\fraks_{r}}}=id.$
To prove (2), we show $\mu\circ(\boxtimes^{r}\na_{\w})=(\boxtimes^{r}\na_{\w})\circ\mu$.
Take $\f\in\W^{\bu}(X^{r})$ and $\s\in\fraks_{r}$. Then

\begin{align*}
\s^{*}\circ(\boxtimes^{r}\na_{\w})\f & =\s^{*}\left(d\f+(\sum_{i}\pi_{i}^{*}\w)\wedge\f\right)\\
 & =d\s^{*}\f+\s^{*}(\sum_{i}\pi_{i}^{*}\w)\wedge\s^{*}\f\\
 & =(\boxtimes^{r}\na_{\w})\circ\s^{*}(\f)
\end{align*}
 since $\s^{*}(\sum_{i}\pi_{i}^{*}\w)=\sum\pi_{i}^{*}\w$.
\end{proof}
\begin{defn}
By the commutative diagram 

\begin{equation}
\xymatrix{\W^{\bu}(X^{r})\ar[dr]^{tf}\ar[d]_{\mu}\\
\W^{\bu}(X^{r})^{\fraks_{t}} & \W^{\bu}(Y)\ar[l]_{\pi^{*}}
}
\label{eq:proof-1}
\end{equation}
we define the map $tf$ which is called a transfer.
\end{defn}

Since $\mu$ and $\pi^{*}$are chain maps, $tf$ is also a chain map
and induces a homomorphism 
\[
(tf)^{*}:H^{\bu}(\W^{\bu}(X^{r}),\boxtimes^{r}\na_{\w})\to H^{\bu}(\W^{\bu}(Y),\na_{\tilde{\w}}).
\]
 We want to show the following.
\begin{prop}
\label{prop:proof-6}$\pi:X^{r}\to Y$ induces the isomorphism

\begin{equation}
\pi^{*}:H^{\bu}(\W^{\bu}(Y),\na_{\tilde{\w}})\to H^{\bu}(\W^{\bu}(X^{r}),\boxtimes^{r}\na_{\w})^{\fraks_{r}}\label{eq:proof-2}
\end{equation}
 
\end{prop}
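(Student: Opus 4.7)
The plan is to exhibit the transfer $tf$ as a two-sided inverse of $\pi^{*}$ on the $\fraks_{r}$-invariant part, reducing the proposition to formal diagram chasing. First I would extract from the commutative diagram \eqref{eq:proof-1} and the preceding lemmas two identities at the cochain level:
\[
\pi^{*}\circ tf=\mu\text{ on }\W^{\bu}(X^{r}),\qquad tf\circ\pi^{*}=\mathrm{id}\text{ on }\W^{\bu}(Y).
\]
The first is the very definition of $tf=(\pi^{*})^{-1}\circ\mu$, where we use the isomorphism $\pi^{*}:\W^{\bu}(Y)\to\W^{\bu}(X^{r})^{\fraks_{r}}$ established in the lemma preceding the definition of $tf$. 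For the second identity, $\pi^{*}\W^{\bu}(Y)$ is already contained in $\W^{\bu}(X^{r})^{\fraks_{r}}$, on which $\mu$ acts as the identity by Lemma \ref{lem:proof-4}(1); hence $\mu\circ\pi^{*}=\pi^{*}$ and $tf\circ\pi^{*}=(\pi^{*})^{-1}\circ\pi^{*}=\mathrm{id}$.

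Next I would pass to cohomology. All three maps are chain maps by Lemma \ref{lem:proof-4}(2) and by the cochain-map statement for $\pi^{*}$, so the two identities descend to
\[
\pi^{*}\circ(tf)^{*}=\mu^{*},\qquad(tf)^{*}\circ\pi^{*}=\mathrm{id}
\]
on the relevant cohomology groups. The second equality immediately shows that $\pi^{*}$ is injective, and its image lies in $H^{\bu}(\W^{\bu}(X^{r}),\boxtimes^{r}\na_{\w})^{\fraks_{r}}$ since this inclusion already holds at the cochain level.

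For surjectivity onto the invariant part, take a class $[\f]$ in $H^{\bu}(\W^{\bu}(X^{r}),\boxtimes^{r}\na_{\w})^{\fraks_{r}}$, meaning $\s^{*}[\f]=[\f]$ for every $\s\in\fraks_{r}$. Averaging over $\fraks_{r}$ then gives $\mu^{*}[\f]=[\f]$, and combining this with $\pi^{*}\circ(tf)^{*}=\mu^{*}$ yields $[\f]=\pi^{*}\bigl((tf)^{*}[\f]\bigr)$, exhibiting $[\f]$ in the image of $\pi^{*}$. The only substantive point, and the closest thing to an obstacle, is that passing to $\fraks_{r}$-invariants commutes with passing to cohomology in this setting; this is precisely what the averaging projector $\mu$ ensures, its existence relying on $\fraks_{r}$ being finite and on working over $\C$. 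Once this is granted, the remainder is purely formal.
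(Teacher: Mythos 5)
Your proposal is correct and follows essentially the same route as the paper: both establish $\pi^{*}\circ tf=\mu$ and $tf\circ\pi^{*}=\mathrm{id}$ at the cochain level from the diagram (\ref{eq:proof-1}), pass to cohomology, and use that $\mu^{*}$ restricts to the identity on the $\fraks_{r}$-invariant classes to conclude that $(tf)^{*}$ inverts $\pi^{*}$. You merely spell out injectivity and surjectivity separately where the paper states the inverse relation at once.
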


\begin{proof}
We see from Lemma \ref{lem:proof-4} that the restriction of $\mu^{*}$
to $H^{\bu}(\W^{\bu}(X^{r}),\boxtimes^{r}\na_{\w})^{\fraks_{r}}$
is the identity map. From the commutative diagram \ref{eq:proof-1}
we have 
\[
\pi^{*}\circ tf=\mu,\quad tf\circ\pi^{*}=id.
\]
 Passing to the cohomology level we have 
\[
\pi^{*}\circ(tf)^{*}=\mu^{*},\quad(tf)^{*}\circ\pi^{*}=id.
\]
 Since $\mu^{*}$ is identity on $H^{\bu}(\W^{\bu}(X^{r}),\boxtimes^{r}\na_{\w})^{\fraks_{r}}$,
$(tf)^{*}$ is the inverse homomorphism of $\pi^{*}:H^{\bu}(\W^{\bu}(Y),\na_{\tilde{\w}})\to H^{\bu}(\W^{\bu}(X^{r}),\boxtimes^{r}\na_{\w})^{\fraks_{r}}$.
\end{proof}

\subsection{Exterior power structure}

By virtue of the K\"unneth formula we have the isomorphism $\kappa:\otimes^{r}\cohom 1X{\w}\to\cohomr$,
where 
\[
\kappa(\f_{1}\otimes\cdots\otimes\f_{r})=\pi_{1}^{*}\f_{1}\wedge\cdots\wedge\pi_{r}^{*}\f_{r}.
\]
 
\begin{prop}
We have the isomorphism

\begin{equation}
\kappa:\bigwedge{}^{r}\cohom 1X{\w}\to\cohomr^{\fraks_{r}}.\label{eq:proof-3}
\end{equation}
\end{prop}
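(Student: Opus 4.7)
The plan is to deduce the proposition directly from the K\"unneth isomorphism $\kappa\colon\otimes^{r}\cohom 1X{\w}\to\cohomr$ recorded in the preceding lemma, by identifying the $\fraks_{r}$-action on the right-hand side with a signed permutation action on the tensor product, and observing that the invariants for the latter action are precisely $\bigwedge^{r}\cohom 1X{\w}$.

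First I would compute the pullback $\s^{*}\kappa(\f_{1}\otimes\cdots\otimes\f_{r})$ for a general $\s\in\fraks_{r}$. From the convention $\s\cdot(x_{1},\dots,x_{r})=(x_{\s^{-1}(1)},\dots,x_{\s^{-1}(r)})$ one gets $\pi_{i}\circ\s=\pi_{\s^{-1}(i)}$, hence $\s^{*}\pi_{i}^{*}=\pi_{\s^{-1}(i)}^{*}$. Applying this factor by factor gives
\[
\s^{*}(\pi_{1}^{*}\f_{1}\wedge\cdots\wedge\pi_{r}^{*}\f_{r})=\pi_{\s^{-1}(1)}^{*}\f_{1}\wedge\cdots\wedge\pi_{\s^{-1}(r)}^{*}\f_{r},
\]
and rearranging these $r$ one-forms to restore the standard order $\pi_{1}^{*},\dots,\pi_{r}^{*}$ costs precisely a sign $\sgn\s$, so
\[
\s^{*}\kappa(\f_{1}\otimes\cdots\otimes\f_{r})=\sgn\s\cdot\kappa(\f_{\s(1)}\otimes\cdots\otimes\f_{\s(r)}).
\]
Thus $\kappa$ intertwines the geometric action of $\fraks_{r}$ on $\cohomr$ with the signed action $\s\cdot(\f_{1}\otimes\cdots\otimes\f_{r})=\sgn\s\cdot\f_{\s(1)}\otimes\cdots\otimes\f_{\s(r)}$ on $\otimes^{r}\cohom 1X{\w}$.

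Second, in characteristic zero the antisymmetrization idempotent $\mathrm{Alt}=\tfrac{1}{r!}\sum_{\s\in\fraks_{r}}\sgn\s\cdot\s$ identifies $\bigwedge^{r}\cohom 1X{\w}$ with the subspace of alternating tensors in $\otimes^{r}\cohom 1X{\w}$; under the twisted action just described, that subspace is exactly the $\fraks_{r}$-invariant part. Restricting $\kappa$ to it therefore yields the required isomorphism onto $\cohomr^{\fraks_{r}}$.

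The main obstacle, and really the only nontrivial point, is the sign bookkeeping: one must check that the reordering of the one-forms produces exactly $\sgn\s$ with no stray factor, and that the K\"unneth map $\kappa$ of the preceding lemma, built as a wedge of pullbacks along the coordinate projections, transports the geometric $\fraks_{r}$-action cleanly to the signed permutation action on the tensor product. Beyond this, the argument uses only standard representation theory of $\fraks_{r}$ over $\C$.
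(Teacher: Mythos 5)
Your proposal is correct and follows essentially the same route as the paper: establish that $\kappa$ intertwines the geometric $\fraks_{r}$-action on $\cohomr$ with the signed permutation action $\s^{\#}(\f_{1}\otimes\cdots\otimes\f_{r})=\sgn{\s}\,\f_{\s(1)}\otimes\cdots\otimes\f_{\s(r)}$ (the paper verifies the sign by writing $\f_{i}=g_{i}(x)dx$ and computing $\s^{*}$ explicitly, which is the same bookkeeping you describe), and then identify $\bigwedge^{r}\cohom 1X{\w}$ with the image of the antisymmetrizer, i.e.\ the invariants of the signed action, so that the equivariant isomorphism $\kappa$ restricts to an isomorphism onto $\cohomr^{\fraks_{r}}$. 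No gaps.
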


\begin{proof}
Let $\f_{i}=g_{i}(x)dx.$ Then $\pi_{i}^{*}\f_{i}=g_{i}(x_{i})dx_{i}$.
The symmetric group $\fraks_{r}$ acts on $\cohomr$ as follows. For
$\s\in\fraks_{r},$

\begin{align*}
\s^{*}(\pi_{1}^{*}\f_{1}\wedge\cdots\wedge\pi_{r}^{*}\f_{r}) & =\s^{*}(g_{1}(x_{1})dx_{1}\wedge\cdots\wedge g_{r}(x_{r})dx_{r})\\
 & =g_{1}(x_{\s^{-1}(1)})\cdots g_{r}(x_{\s^{-1}(r)})dx_{\s^{-1}(1)}\wedge\cdots\wedge dx_{\s^{-1}(r)}\\
 & =\sgn{\s}g_{\s(1)}(x_{1})\cdots g_{\s(r)}(x_{r})dx_{1}\wedge\cdots\wedge dx_{r}.
\end{align*}
 On the other hand, we define the action of $\s$ on $\cohomr$ by

\[
\s^{\#}(\f_{1}\otimes\cdots\otimes\f_{r})=\sgn{\s}\f_{\s(1)}\otimes\cdots\otimes\f_{\s(r)}.
\]
 Then we have the commutative diagram 

\[
\xymatrix{\otimes^{r}H^{1}(\W^{\bu}(X),\na_{\w})\ar[d]_{\s^{\#}}\ar[r]^{\kappa} & H^{r}(\W^{\bu}(X^{r}),\boxtimes^{r}\na_{\w})\ar[d]_{\s^{*}}\\
\otimes^{r}H^{1}(\W^{\bu}(X),\na_{\w})\ar[r]^{\kappa} & H^{r}(\W^{\bu}(X^{r}),\boxtimes^{r}\na_{\w}).
}
\]
Thus if we put $\mu^{\#}=\frac{1}{r!}\sum_{\s\in\fraks_{r}}\s^{\#}$,
then 
\begin{align*}
\kappa\left(\bigwedge{}^{r}H^{1}(\W^{\bu}(X),\na_{\w})\right) & =\kappa\circ\mu^{\#}\left(\otimes^{r}H^{1}(\W^{\bu}(X),\na_{\w})\right)\\
 & =\mu^{*}\circ\kappa\left(\otimes^{r}H^{1}(\W^{\bu}(X),\na_{\w})\right)\\
 & =\mu^{*}\cohomr\\
 & =\cohomr^{\fraks_{r}}.
\end{align*}
This proves the proposition. 
\end{proof}
We write down explicitly the isomorphism
\[
(\pi^{*})^{-1}\circ\kappa:\bigwedge{}^{r}H^{1}(\W^{\bu}(X),\na_{\w})\to H^{\bu}(\W^{\bu}(Y),\na_{\tilde{\w}}).
\]

\begin{prop}
\label{thm:ext-3}The isomorphism $(\pi^{*})^{-1}\circ\kappa$ is
given by
\[
\f_{1}\square\cdots\square\f_{r}\mapsto(tf)^{*}(\pi_{1}^{*}\f_{1}\wedge\cdots\wedge\pi_{r}^{*}\f_{r}),
\]
 where $\square$ denotes the exterior product in $\bigwedge{}^{r}H^{1}(\W^{\bu}(X),\na_{\w})$.
\end{prop}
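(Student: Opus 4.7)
The statement is essentially a diagram chase combining three ingredients already in place: the K\"unneth isomorphism $\kappa$ together with its equivariance $\kappa \circ \mu^{\#} = \mu^{*} \circ \kappa$ (the commutative square in the proof of the previous proposition); the natural identification of the exterior power $\bigwedge^{r} H^{1}(\W^{\bu}(X),\na_{\w})$ with the image of the antisymmetrizer $\mu^{\#}$ inside $\otimes^{r} H^{1}(\W^{\bu}(X),\na_{\w})$; and the transfer identities $(tf)^{*}\circ\pi^{*} = \mathrm{id}$, $\pi^{*}\circ (tf)^{*} = \mu^{*}$ coming from Proposition \ref{prop:proof-6}.

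First I would fix the convention: under the standard identification $\bigwedge^{r}H^{1}(\W^{\bu}(X),\na_{\w}) \simeq \mathrm{Im}(\mu^{\#})$, the element $\f_{1}\square\cdots\square\f_{r}$ corresponds to
\[
\mu^{\#}(\f_{1}\otimes\cdots\otimes\f_{r}) \;=\; \tfrac{1}{r!}\sum_{\s\in\fraks_{r}}\sgn{\s}\,\f_{\s(1)}\otimes\cdots\otimes\f_{\s(r)}.
\]
Applying $\kappa$ and using the commutative square from the proof of the previous proposition, I obtain
\[
\kappa(\f_{1}\square\cdots\square\f_{r}) \;=\; \mu^{*}\bigl(\pi_{1}^{*}\f_{1}\wedge\cdots\wedge\pi_{r}^{*}\f_{r}\bigr) \;\in\; \cohomr^{\fraks_{r}}.
\]

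Next I would apply $(\pi^{*})^{-1}$ to both sides. The identity $\pi^{*}\circ(tf)^{*}=\mu^{*}$ from Proposition \ref{prop:proof-6} rearranges (using the fact that $\mathrm{Im}(\mu^{*})\subset\cohomr^{\fraks_{r}}$, where $\pi^{*}$ is invertible) to $(\pi^{*})^{-1}\circ\mu^{*} = (tf)^{*}$ as maps on $\cohomr$. Hence
\[
(\pi^{*})^{-1}\circ\kappa(\f_{1}\square\cdots\square\f_{r}) \;=\; (tf)^{*}(\pi_{1}^{*}\f_{1}\wedge\cdots\wedge\pi_{r}^{*}\f_{r}),
\]
which is the claimed formula.

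There is no substantial obstacle here; the proposition is a bookkeeping consequence of what has already been proved. The only care needed is in matching normalizations: one must use the same convention for identifying $\bigwedge^{r}$ with the antisymmetric tensors via $\mu^{\#}$ that was implicitly used in the previous proposition, so that the factors of $1/r!$ on the two sides of the commutative square cancel coherently and the formula emerges without an extraneous combinatorial constant.
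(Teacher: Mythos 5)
Your argument is correct and follows the same route as the paper: identify $\f_{1}\square\cdots\square\f_{r}$ with $\mu^{\#}(\f_{1}\otimes\cdots\otimes\f_{r})$, push through $\kappa$ via the commutative square $\kappa\circ\mu^{\#}=\mu^{*}\circ\kappa$, and then convert $(\pi^{*})^{-1}\circ\mu^{*}$ into $(tf)^{*}$ using the transfer identities from Proposition \ref{prop:proof-6}. The remark about matching the $1/r!$ normalization is a sensible precaution but raises no issue here, since the paper uses the same convention throughout.
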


\begin{proof}
We have 
\begin{align*}
\kappa(\f_{1}\square\cdots\square\f_{r}) & =\kappa\left(\frac{1}{r!}\sum_{\s\in\fraks_{r}}\sgn{\s}\f_{\s(1)}\otimes\cdots\otimes\f_{\s(r)}\right)\\
 & =\kappa\left(\frac{1}{r!}\sum_{\s\in\fraks_{r}}\s^{\#}(\f_{1}\otimes\cdots\otimes\f_{r})\right)\\
 & =\kappa\circ\mu^{\#}(\f_{1}\otimes\cdots\otimes\f_{r})\\
 & =\mu^{*}\circ\kappa(\f_{1}\otimes\cdots\otimes\f_{r})\\
 & =\mu^{*}(\pi_{1}^{*}\f_{1}\wedge\cdots\wedge\pi_{r}^{*}\f_{r}).
\end{align*}
 Thus 

\begin{align*}
(\pi^{*})^{-1}\circ\kappa(\f_{1}\square\cdots\square\f_{r}) & =(\pi^{*})^{-1}\circ\mu^{*}(\pi_{1}^{*}\f_{1}\wedge\cdots\wedge\pi_{r}^{*}\f_{r})\\
 & =(tf)^{*}(\pi_{1}^{*}\f_{1}\wedge\cdots\wedge\pi_{r}^{*}\f_{r}).
\end{align*}
 This proves the proposition and we have finished the proof of Theorem
\ref{thm:main}.
\end{proof}
In constructing a basis of $H^{r}(\W^{\bullet}(Y),\na_{\tilde{\w}})$
explicitly, the following result is helpful.

Suppose that the $1$-forms $\f_{1},\dots,\f_{N-2}$ give a basis
of $H^{1}(\W^{\bullet}(X),\na_{\w})$. And let $\psi_{1},\dots,\psi_{N-2}\in\W^{1}(Y)$
be such that 
\begin{equation}
\pi^{*}\psi_{j}=\sum_{1\leq i\leq r}\pi_{i}^{*}\f_{j}=\sum_{1\leq i\leq r}\f_{j}(x_{i}).\label{eq:proof-4}
\end{equation}
Then we have
\begin{prop}
\label{prop:exp-basis-1}$H^{r}(\W^{\bullet}(Y),\na_{\tilde{\w}})=\bigoplus_{1\leq j_{1}<\cdots<j_{r}\le N-2}\C\psi_{j_{1}}\wedge\cdots\wedge\psi_{j_{r}}.$
\end{prop}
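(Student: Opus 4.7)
The plan is to show each wedge product $\psi_{j_{1}}\wedge\cdots\wedge\psi_{j_{r}}$ is a nonzero scalar multiple of the image of $\f_{j_{1}}\square\cdots\square\f_{j_{r}}$ under the isomorphism $(\pi^{*})^{-1}\circ\kappa:\bigwedge^{r}H^{1}(\W^{\bu}(X),\na_{\w})\to H^{r}(\W^{\bu}(Y),\na_{\tilde{\w}})$ coming from Proposition \ref{thm:ext-3}. Since the elements $\f_{j_{1}}\square\cdots\square\f_{j_{r}}$ with $1\le j_{1}<\cdots<j_{r}\le N-2$ form a basis of $\bigwedge^{r}H^{1}(\W^{\bu}(X),\na_{\w})$, the claimed direct sum decomposition follows immediately.

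First I would observe that $\psi_{j_{1}}\wedge\cdots\wedge\psi_{j_{r}}\in\W^{r}(Y)$ is of top degree on $Y$, hence automatically $\na_{\tilde{\w}}$-closed, so it genuinely defines a class in $H^{r}(\W^{\bu}(Y),\na_{\tilde{\w}})$. The heart of the argument is to compute $\pi^{*}(\psi_{j_{1}}\wedge\cdots\wedge\psi_{j_{r}})$. Using (\ref{eq:proof-4}) and multilinearity,
\[
\pi^{*}(\psi_{j_{1}}\wedge\cdots\wedge\psi_{j_{r}})=\sum_{(i_{1},\dots,i_{r})\in\{1,\dots,r\}^{r}}\pi_{i_{1}}^{*}\f_{j_{1}}\wedge\cdots\wedge\pi_{i_{r}}^{*}\f_{j_{r}}.
\]
Because each $\pi_{i}^{*}\f_{j}$ only involves $dx_{i}$ on the affine chart, any term with a repeated index vanishes, and only tuples where $(i_{1},\dots,i_{r})$ is a permutation $\tau\in\fraks_{r}$ survive. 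Relabeling and using $\tau^{*}\pi_{i}^{*}=\pi_{\tau^{-1}(i)}^{*}$, the surviving sum is precisely $r!\cdot\mu(\pi_{1}^{*}\f_{j_{1}}\wedge\cdots\wedge\pi_{r}^{*}\f_{j_{r}})$.

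Combining this with the transfer identity $\mu=\pi^{*}\circ tf$ from diagram (\ref{eq:proof-1}) and the injectivity of $\pi^{*}:\W^{r}(Y)\to\W^{r}(X^{r})^{\fraks_{r}}$, I cancel $\pi^{*}$ to obtain an equality of forms
\[
\psi_{j_{1}}\wedge\cdots\wedge\psi_{j_{r}}=r!\cdot tf\bigl(\pi_{1}^{*}\f_{j_{1}}\wedge\cdots\wedge\pi_{r}^{*}\f_{j_{r}}\bigr).
\]
Passing to cohomology classes and invoking Proposition \ref{thm:ext-3} identifies this class with $r!\cdot(\pi^{*})^{-1}\circ\kappa(\f_{j_{1}}\square\cdots\square\f_{j_{r}})$. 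Since this isomorphism sends a basis of $\bigwedge^{r}H^{1}(\W^{\bu}(X),\na_{\w})$ (of cardinality $\binom{N-2}{r}$ by Proposition \ref{prop:main-3}) to a basis of $H^{r}(\W^{\bu}(Y),\na_{\tilde{\w}})$, the classes $[\psi_{j_{1}}\wedge\cdots\wedge\psi_{j_{r}}]$ form a basis, yielding the direct sum decomposition.

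The only real obstacle is the combinatorial bookkeeping in the pullback expansion: verifying that terms with repeated indices vanish (thanks to $dx_{i}\wedge dx_{i}=0$) and tracking the factor $r!$ against the definition of $\mu$. Everything else is formal given Theorem \ref{thm:main} and Proposition \ref{thm:ext-3}, and the existence of $\psi_{j}$ itself is guaranteed by $\sum_{i}\pi_{i}^{*}\f_{j}$ being $\fraks_{r}$-invariant together with the isomorphism $\pi^{*}:\W^{1}(Y)\simeq\W^{1}(X^{r})^{\fraks_{r}}$.
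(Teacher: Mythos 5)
Your proposal is correct and follows essentially the same route as the paper: both expand $\pi^{*}(\psi_{j_{1}}\wedge\cdots\wedge\psi_{j_{r}})$ via (\ref{eq:proof-4}), observe that only permutation-indexed terms survive so the result equals $r!\,\mu(\pi_{1}^{*}\f_{j_{1}}\wedge\cdots\wedge\pi_{r}^{*}\f_{j_{r}})$, and then transport the basis of $\bigwedge^{r}H^{1}(\W^{\bu}(X),\na_{\w})$ through the isomorphism of Proposition \ref{thm:ext-3}. Your explicit remarks on the vanishing of repeated-index terms and on the nonzero factor $r!$ are exactly the bookkeeping the paper carries out by comparing its two displayed computations.
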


\begin{proof}
Recall that Proposition \ref{thm:ext-3} says that the isomorphism
\[
(\pi^{*})^{-1}\circ\kappa:\bigwedge{}^{r}H^{1}(\W^{\bullet}(X),\na_{\w})\to H^{\bu}(\W^{\bu}(Y),\na_{\tilde{\w}})
\]
 is given by

\[
\f_{1}\square\cdots\square\f_{r}\mapsto(tf)^{*}(\pi_{1}^{*}\f_{1}\wedge\cdots\wedge\pi_{r}^{*}\f_{r}),
\]
where $(tf)^{*}=(\pi^{*})^{-1}\circ\mu^{*}$ and, $\mu^{*}$ and $\pi{}^{*}$
are those defined in Lemma \ref{lem:proof-4} and  Proposition \ref{prop:proof-6},
respectively. We see that 
\begin{align*}
\mu^{*}(\pi_{1}^{*}\f_{j_{1}}\wedge\cdots\wedge\pi_{r}^{*}\f_{j_{r}}) & =\mu^{*}(\f_{j_{1}}(x_{1})\wedge\cdots\wedge\f_{j_{r}}(x_{r}))\\
 & =\frac{1}{r!}\sum_{\s\in\fraks_{r}}\f_{j_{1}}(x_{\s^{-1}(1)})\wedge\cdots\wedge\f_{j_{r}}(x_{\s^{-1}(r)})\\
 & =\frac{1}{r!}\sum_{\s\in\fraks_{r}}\sgn{\s}\cdot\f_{j_{\s(1)}}(x_{1})\wedge\cdots\wedge\f_{j_{\s(r)}}(x_{r}).
\end{align*}
 On the other hand, noting that $\pi^{*}\psi_{j}=\sum_{1\leq i\leq r}\f_{j}(x_{i})$,
we have

\begin{align*}
\pi^{*}(\psi_{j_{1}}\wedge\cdots\wedge\psi_{j_{r}}) & =\left(\sum_{i_{1}}\f_{j_{1}}(x_{i_{1}})\right)\wedge\cdots\wedge\left(\sum_{i_{r}}\f_{j_{r}}(x_{i_{r}})\right)\\
 & =\sum_{1\leq i_{1},\dots,i_{r}\leq r}\f_{j_{1}}(x_{i_{1}})\wedge\cdots\wedge\f_{j_{r}}(x_{i_{r}})\\
 & =\sum_{\s\in\fraks_{r}}\f_{j_{1}}(x_{\s^{-1}(1)})\wedge\cdots\wedge\f_{j_{r}}(x_{\s^{-1}(r)})\\
 & =\sum_{\s\in\fraks_{r}}\sgn{\s}\f_{j_{\s(1)}}(x_{1})\wedge\cdots\wedge\f_{j_{\s(r)}}(x_{r}).
\end{align*}
 Comparing the above two expression we get the desired result.
\end{proof}

\section{\label{sec:Explicit-basis}Explicit basis of the de Rham cohomology
group}

In this section, applying Proposition \ref{prop:exp-basis-1}, we
give two examples of the choice of a basis of the cohomology group
$H^{r}(\W^{\bu}(Y),\na_{\tilde{\w}})$ defined for the Veronese point
$w\in Z_{r+1}$. We adopt the notation in Sections \ref{sec:Main-theorem}
and \ref{sec:Proof-of-theorem}. Note that the parameter $\a\in\C^{N}$
in the $1$-form $\w$ (and $\tilde{\w}$) satisfies the conditions 

\begin{equation}
\a_{0}^{(1)}+\cdots+\a_{0}^{(\ell)}=0\label{eq:basis-0}
\end{equation}
and 
\[
\a_{n_{k}-1}^{(k)}\begin{cases}
\notin\Z & \text{if}\,n_{k}=1,\\
\neq0 & \text{if}\,n_{k}>1.
\end{cases}
\]

\subsection{\label{subsec:Example-1}Example 1}

We choose a basis of the cohomology group $H^{1}(\W^{\bullet}(X),\na_{\w})$
corresponding to the point $z\in Z_{2}$ as follows. Here $X=\Ps^{1}\setminus\{p^{(1)},\dots,p^{(\ell)}\}$
with the distinct points $p^{(k)}=\{[t]\in\Ps^{1}\mid tz_{0}^{(k)}=0\}$
in $\Ps^{1}$. Let $x=t_{1}/t_{0}$ be an affine coordinate of $U=\{t_{0}\neq0\}\subset\Ps^{1}$
and put $\bdx=(1,x)$. 

Let $\cW$ be the $\C$-vector space consisting of the $1$-forms
$\f$ which is expressed as a linear combination of 

\[
d(\theta_{i}(\bdx z^{(k)})),\quad1\leq k\leq\ell,0\leq i<n_{k}
\]
and is holomorphic on $X$. 

In the case where $\{p^{(1)},\dots,p^{(\ell)}\}\subset U$, we can
take
\begin{equation}
\begin{cases}
d(\theta_{i}(\bdx z^{(k)})), & 1\leq k\leq\ell,1\leq i<n_{k},\\
d(\theta_{0}(\bdx z^{(k)}))-d(\theta_{0}(\bdx z^{(\ell)})), & 1\leq k<\ell
\end{cases}\label{eq:basis-1}
\end{equation}
as a basis of $\cW$.

In the case where one member of $\{p^{(1)},\dots,p^{(\ell)}\}$, say
$p^{(\ell)}$corresponds to $x=\infty$, we can take 
\begin{equation}
\begin{cases}
d(\theta_{i}(\bdx z^{(k)})), & 1\leq k\leq\ell,1\leq i<n_{k},\\
d(\theta_{0}(\bdx z^{(k)})), & 1\leq k<\ell
\end{cases}\label{eq:basis-3}
\end{equation}
as a basis of $\cW$. We see that $\dim_{\C}\cW=N-1.$ Since any form
$\varphi\in\cW$ is $\na_{\w}$-closed, it defines a cohomology class
$[\varphi]\in H^{1}(\W^{\bullet}(X),\na_{\w}).$ 

The following proposition is known and is easily shown.
\begin{prop}
\label{prop:Basis-1}The following map is an isomorphism of vector
spaces: 
\[
\begin{array}{ccc}
\cW\slash\C\w & \lto & H^{1}(\W^{\bullet}(X),\na_{\w})\\
\varphi\ \mod.\ \C\w & \mapsto & [\varphi]
\end{array}
\]
\end{prop}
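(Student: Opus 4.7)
The strategy is to verify well-definedness, match dimensions on both sides, and then establish injectivity; bijectivity will follow. Well-definedness is automatic because $\dim X=1$ forces $\W^{2}(X)=0$, so $H^{1}(\W^{\bu}(X),\na_{\w})=\W^{1}(X)/\na_{\w}\W^{0}(X)$ and every $\varphi\in\cW\subset\W^{1}(X)$ defines a class $[\varphi]\in H^{1}$. Moreover, $\w$ itself lies in $\cW$ by its explicit form (\ref{eq:main-5}), and $\w=\na_{\w}(1)$ is exact, so the map descends to $\cW/\C\w$. The basis (\ref{eq:basis-1}) or (\ref{eq:basis-3}) gives $\dim_{\C}\cW=N-1$, whence $\dim_{\C}\cW/\C\w=N-2=\dim_{\C}H^{1}(\W^{\bu}(X),\na_{\w})$ by Proposition \ref{prop:main-3}. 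It therefore suffices to prove that the map is injective.

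For injectivity, suppose $\varphi=\na_{\w}f=df+f\w$ for some $f\in\W^{0}(X)$, i.e., a rational function on $\Ps^{1}$ with poles only in $\{p^{(1)},\dots,p^{(\ell)}\}$. The aim is to deduce that $f$ is constant, so that $\varphi=f\w\in\C\w$. I argue locally at each $p^{(k)}$: every element of $\cW$ has pole of order at most $n_{k}$ there, while $\w$ has pole of order exactly $n_{k}$, with leading coefficient a nonzero multiple of $\a_{n_{k}-1}^{(k)}$ (if $n_{k}\geq2$) or of $\a_{0}^{(k)}$ (if $n_{k}=1$), both nonzero by (\ref{eq:main-2}). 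Suppose, toward a contradiction, that $f$ has a pole of order $m\geq1$ at $p^{(k)}$. If $n_{k}\geq2$, then $f\w$ has pole of order $m+n_{k}$, strictly dominating the pole of order $m+1$ of $df$, so $\na_{\w}f$ has pole of order $m+n_{k}>n_{k}$, contradicting $\varphi\in\cW$. If $n_{k}=1$, both $df$ and $f\w$ contribute at order $m+1$; in a local coordinate $t$ at $p^{(k)}$ with $f=ct^{-m}+O(t^{-m+1})$ and $\w=\a_{0}^{(k)}t^{-1}dt+O(1)dt$, the leading coefficient of $\na_{\w}f$ is $(\a_{0}^{(k)}-m)c$, which is nonzero since $\a_{0}^{(k)}\notin\Z$. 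Again $\na_{\w}f$ has pole of order $m+1>1=n_{k}$, a contradiction. Hence $f$ is holomorphic at every $p^{(k)}$ (the case $p^{(k)}=\infty$ is handled identically after the coordinate change $w=1/x$), so $f$ is a rational function on $\Ps^{1}$ with no poles and therefore a constant.

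The main obstacle is the local pole-order analysis in the second paragraph; the essential input is the hypothesis (\ref{eq:main-2}), which rules out both the vanishing of the leading coefficient $\a_{n_{k}-1}^{(k)}$ in the confluent case and the integer resonance $\a_{0}^{(k)}=m$ in the non-confluent case that would otherwise allow $f$ to develop a pole at some $p^{(k)}$ and still produce an element of $\cW$ after applying $\na_{\w}$.
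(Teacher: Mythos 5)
Your proof is correct. The paper itself offers no argument for this proposition --- it is dismissed as ``known and easily shown'' (the substance lives in \cite{kim97-1}) --- so there is no in-paper proof to compare against; what you have written is the standard argument, and it is complete. The reduction to injectivity via the dimension count $\dim_{\C}\cW/\C\w=N-2=\dim_{\C}H^{1}$ (using Proposition \ref{prop:main-3}, which the paper also imports from \cite{kim97-1}, so no circularity) is sound, and the local pole-order analysis correctly isolates where (\ref{eq:main-2}) is needed: non-vanishing of the order-$n_{k}$ leading coefficient of $\w$ when $n_{k}\geq2$, and non-resonance $\a_{0}^{(k)}\neq m$ when $n_{k}=1$. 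One small gloss: the claim that ``$\w$ itself lies in $\cW$ by its explicit form (\ref{eq:main-5})'' is not quite automatic from the formula alone --- membership in $\cW$ also requires holomorphy on $X$, and when $\infty\in X$ the individual terms $d\theta_{0}(\bdx z^{(k)})$ each have a simple pole at $x=\infty$; it is precisely condition (\ref{eq:main-1}), $\sum_{k}\a_{0}^{(k)}=0$, that cancels these residues (this is why the paper rewrites $\w$ as (\ref{eq:basis-4})). Also, implicit in your statement that the leading coefficient of $\w$ at $p^{(k)}$ is a nonzero multiple of $\a_{n_{k}-1}^{(k)}$ is the fact that $\theta_{n_{k}-1}(\bdx z^{(k)})$ genuinely attains pole order $n_{k}-1$ there, which uses $\bdx z_{1}^{(k)}\neq0$ at $p^{(k)}$, guaranteed by $z\in Z_{2}$. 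Both points are one-line fixes and do not affect the validity of the argument.
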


To find a basis of $\cW\slash\C\w$, recall that $\w$ can be written
as 

\begin{equation}
\w=\sum_{k=1}^{\ell-1}\a_{0}^{(k)}\{d(\theta_{0}(\bdx z^{(k)}))-d(\theta_{0}(\bdx z^{(\ell)}))\}+\sum_{1\leq k\leq\ell}\sum_{1\leq i<n_{k}}\a_{i}^{(k)}d(\theta_{i}(\bdx z^{(k)}))\label{eq:basis-4}
\end{equation}
in the case $\{p^{(1)},\dots,p^{(\ell)}\}\subset U$ by virtue of
the condition (\ref{eq:basis-0}), and 
\begin{equation}
\w=\sum_{k=1}^{\ell-1}\a_{0}^{(k)}d(\theta_{0}(\bdx z^{k}))+\sum_{1\leq k\leq\ell}\sum_{1\leq i<n_{k}}\a_{i}^{(k)}d(\theta_{i}(\bdx z^{(k)}))\label{eq:basis-5}
\end{equation}
in the case $p^{(\ell)}$corresponds to $x=\infty$ since $d(\theta_{0}(\bdx z^{(\ell)}))=0$.
Note that the $1$-form $\w$ is, in either case, a linear combination
of the basis of $\cW$ given in (\ref{eq:basis-1}) or in (\ref{eq:basis-3}).
Therefore if we omit any one member from (\ref{eq:basis-1}) (resp.
(\ref{eq:basis-3})) whose coefficient in (\ref{eq:basis-4}) (resp.
(\ref{eq:basis-5})) is nonzero, we get a basis of $\cW\slash\C\w$
and hence of the cohomology group $H^{1}(\W^{\bullet}(X),\na_{\w})$.
Let $\{\f_{1},\dots,\f_{N-2}\}$ be such basis of $H^{1}(\W^{\bullet}(X),\na_{\w})$.

Let $w=\Phi_{\l}(z)\in Z_{r+1}$ be the Veronese point corresponding
to $z\in Z_{2}$ and let $(x_{1},\dots,x_{r})$ be the coordinates
of $X^{r}.$ Let $Y=S^{r}X$ with the coordinates $y_{1},\dots,y_{r}$,
where $y_{i}$ is connected to $x_{1},\dots,x_{r}$ as the $i$-th
elementary symmetric function of $x_{1},\dots,x_{r}.$ Put $\bdy=(1,y_{1},\dots,y_{r})$
and consider the following $1$-forms on $Y$:

\begin{equation}
\begin{cases}
d(\theta_{i}(\bdy w^{(k)})), & 1\leq k\leq\ell,1\leq i<n_{k},\\
d(\theta_{0}(\bdy w^{(k)}))-d(\theta_{0}(\bdy w^{(\ell)})), & 1\leq k<\ell,
\end{cases}\label{eq:basis-2}
\end{equation}
and
\begin{equation}
\begin{cases}
d(\theta_{i}(\bdy w^{(k)})), & 1\leq k\leq\ell,1\leq i<n_{k},\\
d(\theta_{0}(\bdy w^{(k)})), & 1\leq k<\ell
\end{cases}\label{eq:basis-6}
\end{equation}
correspoinding to the case (\ref{eq:basis-1}) and (\ref{eq:basis-3}),
respectively.
\begin{lem}
\label{lem:Basis-2}We have 
\begin{equation}
d(\theta_{i}(\bdy w^{(k)}))=\sum_{1\leq j\leq r}d(\theta_{i}(\bdx_{j}z^{(k)})),\quad1\leq k\leq\ell,0\leq i<n_{k},\label{eq:basis-7}
\end{equation}
where $\bdx_{j}=(1,x_{j})$.
\end{lem}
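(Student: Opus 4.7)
The plan is to deduce (\ref{eq:basis-7}) directly from Proposition \ref{prop:vero-7} by taking the formal logarithm of the generating-function identity stated there and then applying the exterior derivative. Fix $k\in\{1,\dots,\ell\}$ and put $n=n_{k}$. First, I would apply Proposition \ref{prop:vero-7} to the $k$-th block $z^{(k)}\in\mat{2,n_{k}}$; since the affine coordinates $y_{1},\dots,y_{r}$ of $Y=S^{r}X$ are by construction the elementary symmetric functions of $x_{1},\dots,x_{r}$ (this substitution is precisely the pull-back $\pi^{*}$), the proposition yields the congruence
\begin{equation*}
\sum_{0\le i<n}(\bdy w^{(k)})_{i}\,T^{i}\equiv\prod_{j=1}^{r}\Bigl(\sum_{0\le m<n}(\bdx_{j}z^{(k)})_{m}\,T^{m}\Bigr)\pmod{T^{n}}
\end{equation*}
in $\C[T]/(T^{n})$, where $\bdx_{j}=(1,x_{j})$ and $(\cdot)_{m}$ denotes the $m$-th entry of the indicated row vector.

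Next, I would take the formal logarithm of both sides modulo $T^{n}$. By the very definition of the $\theta_{m}$'s as the coefficients of the generating function $\log(x_{0}+x_{1}T+\cdots)=\sum_{m\ge 0}\theta_{m}(x)T^{m}$, the left-hand side becomes $\sum_{m=0}^{n-1}\theta_{m}(\bdy w^{(k)})\,T^{m}$ modulo $T^{n}$, while the right-hand side, since the logarithm of a product equals the sum of logarithms, becomes $\sum_{j=1}^{r}\sum_{m=0}^{n-1}\theta_{m}(\bdx_{j}z^{(k)})\,T^{m}$ modulo $T^{n}$. Comparing coefficients of $T^{i}$ for $0\le i<n$ then gives
\[
\theta_{i}(\bdy w^{(k)})=\sum_{j=1}^{r}\theta_{i}(\bdx_{j}z^{(k)})+c_{i},
\]
where $c_{i}=0$ for every $i\ge 1$ (each such $\theta_{m}$ is a genuine polynomial in $x_{1}/x_{0},\dots,x_{m}/x_{0}$, so the equality is an honest polynomial identity), and $c_{0}$ is at worst an additive constant arising from the choice of a branch of $\log$.

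Finally, applying the exterior derivative $d$ to both sides annihilates the constant $c_{0}$ and produces exactly the equality in (\ref{eq:basis-7}). The only mild subtlety is the branch ambiguity for the $i=0$ piece $\theta_{0}=\log(\cdot)$, but this is harmless precisely because we differentiate at the end; alternatively, one may note that comparing the $T^{0}$-coefficients of both sides of the congruence above already yields the polynomial identity $(\bdy w^{(k)})_{0}=\prod_{j=1}^{r}(\bdx_{j}z^{(k)})_{0}$, so after choosing compatible branches the constant $c_{0}$ vanishes as well.
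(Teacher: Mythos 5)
Your proposal is correct and follows essentially the same route as the paper: apply Proposition \ref{prop:vero-7} blockwise, take the formal logarithm of the resulting congruence mod $(T^{n_k})$, compare coefficients of $T^{i}$ to get $\theta_{i}(\bdy w^{(k)})=\sum_{j}\theta_{i}(\bdx_{j}z^{(k)})$, and then apply $d$. Your extra care with the branch constant in the $i=0$ term is a minor refinement the paper leaves implicit, but it does not change the argument.
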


\begin{proof}
It is enough to show (\ref{eq:basis-7}) for a fixed $k$. So the
situation can be reduced to the case where the partition $\l$ is
$(n)$. In this case $z=(z_{0},\dots,z_{n-1})\in\matt{2,n}$ and $w=\Phi_{n}(z)=(w_{0},\dots,w_{n-1})\in\matt{r+1,n}$.
We apply Proposition \ref{prop:vero-7} by setting 
\[
\bdx z=(l_{0}(x),\dots,l_{n-1}(x)),\;\bdy w=(L_{0}(y),\dots,L_{n-1}(y))
\]
and we have 
\[
\sum_{0\leq i<n}L_{i}(y)T^{i}\equiv\prod_{1\leq j\leq r}\left(\sum_{0\leq i<n}l_{i}(x_{j})T^{i}\right)\quad\mathrm{mod}\,(T^{n}).
\]
 Taking a log of both sides
\begin{align*}
\log\left(\sum_{0\leq i<n}L_{i}(y)T^{i}\right) & \equiv\sum_{1\leq j\leq r}\log\left(\sum_{0\leq i<n}l_{i}(x_{j})T^{i}\right)\\
 & \equiv\sum_{1\leq j\leq r}\left(\theta_{0}(\bdx_{j}z)+\theta_{1}(\bdx_{j}z)T+\cdots\right)\quad\mod(T^{n}).
\end{align*}
Similarly the left hand side is $\theta_{0}(\bdy w)+\theta_{1}(\bdy w)T+\cdots$.
Comparing the coefficient of $T^{i}$ of the both sides, we have 
\[
\theta_{i}(\bdy w)=\sum_{1\leq j\leq r}\theta_{i}(\bdx_{j}z)
\]
and the desired identity.
\end{proof}
Let $\psi_{1},\dots,\psi_{N-2}\in\W^{1}(Y)$ be those selected from
(\ref{eq:basis-2}) or (\ref{eq:basis-6}) in a similar manner as
$\f_{1},\dots,\f_{N-2}\in\W^{1}(X)$. Then we can applying Proposition
\ref{prop:exp-basis-1} since the condition (\ref{eq:proof-4}) is
assured by Lemma \ref{lem:Basis-2}, and we have the following result.
\begin{prop}
\label{prop:Basis-3}Let $w=\Phi_{\l}(z)$ be the Veronese point,
and let $\psi_{1},\dots,\psi_{N-2}\in\W^{1}(Y)$ be those defined
from (\ref{eq:basis-2}) or (\ref{eq:basis-6}) as explained in the
preceding paragraph. Then 
\[
\{\psi_{j_{1}}\wedge\cdots\wedge\psi_{j_{r}}\mid1\leq j_{1}<\cdots<j_{r}\le N-2\}\subset\W^{r}(Y)
\]
 gives a basis of $H^{r}(\W^{\bu}(Y),\na_{\tilde{\w}})$.
\end{prop}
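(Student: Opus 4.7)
The plan is to verify the hypothesis of Proposition \ref{prop:exp-basis-1} and invoke it directly. By construction, the one-forms $\f_{1},\dots,\f_{N-2}$ are obtained from the spanning set (\ref{eq:basis-1}) or (\ref{eq:basis-3}) of $\cW$ by discarding one generator whose coefficient in the expansion (\ref{eq:basis-4}) or (\ref{eq:basis-5}) of $\w$ is non-zero, and Proposition \ref{prop:Basis-1} guarantees that they descend to a basis of $\cohom 1X{\w}$. The forms $\psi_{1},\dots,\psi_{N-2}$ are defined by the parallel recipe on $Y$: the generating set (\ref{eq:basis-2}) or (\ref{eq:basis-6}) is obtained from (\ref{eq:basis-1}) or (\ref{eq:basis-3}) by substituting $\bdy w^{(k)}$ for $\bdx z^{(k)}$, which gives an obvious index-preserving bijection; I simply discard the $\psi_{j}$ that corresponds to the $\f_{j}$ already discarded on the $X$-side.

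The heart of the argument is to verify that the pull-back identity (\ref{eq:proof-4}) holds, i.e.\ $\pi^{*}\psi_{j}=\sum_{1\leq i\leq r}\pi_{i}^{*}\f_{j}$ for $j=1,\dots,N-2$. By linearity this reduces to checking it on each of the elementary generators. Lemma \ref{lem:Basis-2} delivers exactly this in the generator form: for every admissible pair $(k,i)$,
\[
d(\theta_{i}(\bdy w^{(k)}))=\sum_{1\leq j\leq r}d(\theta_{i}(\bdx_{j}z^{(k)}))=\sum_{1\leq j\leq r}\pi_{j}^{*}d(\theta_{i}(\bdx z^{(k)})),
\]
so the identity holds for all $\psi_{j}$ of the form $d(\theta_{i}(\bdy w^{(k)}))$ with $i\geq 1$ and, by subtracting two such relations with $i=0$, also for the ``difference'' generators in (\ref{eq:basis-2}). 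In the chart where $p^{(\ell)}$ is at infinity the second term $d(\theta_{0}(\bdy w^{(\ell)}))$ simply vanishes, so the same identity covers the pure $d(\theta_{0}(\bdy w^{(k)}))$ generators of (\ref{eq:basis-6}).

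Once (\ref{eq:proof-4}) is in hand, Proposition \ref{prop:exp-basis-1} applies verbatim and yields
\[
\cohom rY{\tilde{\w}}=\bigoplus_{1\leq j_{1}<\cdots<j_{r}\leq N-2}\C\,\psi_{j_{1}}\wedge\cdots\wedge\psi_{j_{r}},
\]
which is the claimed basis. The only substantive step in the whole argument is the verification of (\ref{eq:proof-4}), and that step is settled by Lemma \ref{lem:Basis-2}; the rest is bookkeeping with the chosen generators of $\cW$.
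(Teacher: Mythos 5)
Your proposal is correct and follows essentially the same route as the paper: the paper likewise reduces the claim to verifying the pull-back condition (\ref{eq:proof-4}), settles that condition by Lemma \ref{lem:Basis-2}, and then invokes Proposition \ref{prop:exp-basis-1}. Your explicit handling of the difference generators and of the chart where $p^{(\ell)}$ lies at infinity merely spells out bookkeeping that the paper leaves implicit.
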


\subsection{Example 2}

If we start from another basis for the cohomology group $H^{1}(\W^{\bullet}(X),\na_{\w})$,
we can obtain another set of cohomology classes which forms a basis
for $H^{r}(\W^{\bu}(Y),\na_{\tilde{\w}})$. We give an example for
it.

Let $X=\Ps^{1}\setminus\{p^{(1)},\dots,p^{(\ell)}\}$ be as in Section
\ref{subsec:Example-1} and assume that $p^{(\ell)}$ corresponds
to $x=\infty$, in which case we may take $z\in\Z_{2}$ as $z_{0}^{(\ell)}=\,^{t}(1,0)$.
\begin{prop}
\label{prop:Basis-4} Let $w=\Phi_{\l}(z)$ be the Veronese point
and assume $z_{0}^{(\ell)}=\,^{t}(1,0)$ for $z=(z^{(1)},\dots,z^{(\ell)})\in\znn$.
Then we can take a basis of $H^{1}(\W^{\bullet}(X),\na_{\w})$ the
forms

\[
\f_{i}=x^{i}dx,\quad0\leq i\leq N-3.
\]
In this case we can take a basis of $H^{r}(\W^{\bu}(Y),\na_{\tilde{\w}})$
as 
\[
S_{\mu}(y)dy_{1}\wedge\cdots\wedge dy_{r},
\]
where $\mu$ is a Young diagram contained in the shape $(\overbrace{N-r-2,\dots,N-r-2}^{r})$
and $S_{\mu}(y)$ is the polynomial in $y$ obtained from the Schur
function $s_{\mu}(x_{1},\dots,x_{r})$ for $\mu$ by representing
it as a function of elementary symmetric polynomials $y_{1},\dots,y_{r}$
of $x_{1},\dots,x_{r}.$
\end{prop}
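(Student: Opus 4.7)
The plan is to reduce the proposition to two tasks: establishing that the monomial forms $\f_i = x^i\,dx$ for $0\leq i\leq N-3$ form a basis of the one-dimensional cohomology $H^1(\W^\bullet(X),\na_\w)$, and then applying Proposition \ref{prop:exp-basis-1} together with the Weyl bialternant formula to identify the induced top forms on $Y$ with Schur polynomials. Each $\f_i$ lies in $\W^1(X)$ because its only singularity is at the arrangement point $p^{(\ell)}=\infty$, and in dimension one every $1$-form is automatically $\na_\w$-closed.

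For the first task I would use $\dim H^1=N-2$ from Proposition \ref{prop:main-3} and argue linear independence by contradiction. Suppose $\sum_{i=0}^{N-3} c_i x^i\,dx=\na_\w g$ for some $g\in\W^0(X)$. A pole analysis at each finite $p^{(k)}$ ($k<\ell$) forces $g$ to be regular there: when $n_k>1$, the leading pole of $g\w$ has order $m+n_k>m+1=\operatorname{ord}(dg)$ with non-vanishing coefficient proportional to $\a_{n_k-1}^{(k)}$; when $n_k=1$, the leading-pole coefficient of $\na_\w g$ is proportional to $\a_0^{(k)}-m\neq 0$ since $\a_0^{(k)}\notin\Z$. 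Iterating on Taylor coefficients then forces $g$ to vanish to order $\geq n_k$ at $p^{(k)}$, so $g$ is a polynomial whose total zero-multiplicity is $\geq\sum_{k<\ell}n_k=N-n_\ell$. At infinity, the normalization $z_0^{(\ell)}={}^t(1,0)$ together with $\sum_k\a_0^{(k)}=0$ gives $\w$ a pole of order exactly $n_\ell$ (or regularity when $n_\ell=1$); matching pole orders with the right-hand side, whose worst pole at infinity is of order $N-1$, forces $\deg g\leq N-n_\ell-1$, contradicting the previous bound and hence $g=0$.

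For the second task, the identity $\sum_i x_i^j\,dx_i=\frac{1}{j+1}\,dp_{j+1}(x)$ and Newton's identities produce a polynomial $P_{j+1}(y)$ in the elementary symmetric functions such that $\psi_j:=\frac{1}{j+1}\,dP_{j+1}(y)\in\W^1(Y)$ satisfies $\pi^*\psi_j=\sum_i\f_j(x_i)$. Proposition \ref{prop:exp-basis-1} then supplies the basis $\{\psi_{j_1}\wedge\cdots\wedge\psi_{j_r}:0\leq j_1<\cdots<j_r\leq N-3\}$ of $H^r(\W^\bullet(Y),\na_{\tilde\w})$. Expanding the pullback gives
\[
\pi^*(\psi_{j_1}\wedge\cdots\wedge\psi_{j_r})=\det(x_i^{j_k})_{i,k}\,dx_1\wedge\cdots\wedge dx_r,
\]
and the Weyl bialternant identity $\det(x_i^{\mu_k+r-k})=s_\mu(x)\Delta(x)$, with $\mu_k:=j_{r-k+1}-(r-k)$ ranging bijectively over partitions contained in $(N-r-2)^r$, rewrites this as $s_\mu(x)\Delta(x)\,dx_1\wedge\cdots\wedge dx_r$. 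Since the Jacobian of the elementary-symmetric map is $\Delta(x)$, we likewise have $\pi^*(S_\mu(y)\,dy_1\wedge\cdots\wedge dy_r)=s_\mu(x)\Delta(x)\,dx_1\wedge\cdots\wedge dx_r$, and the injectivity of $\pi^*$ on $\fraks_r$-invariant top forms identifies $\psi_{j_1}\wedge\cdots\wedge\psi_{j_r}$ with $\pm S_\mu(y)\,dy_1\wedge\cdots\wedge dy_r$, proving the claim.

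The main obstacle is the first task. Unlike the basis constructed in Section \ref{subsec:Example-1} via $\cW/\C\w$, the monomials $x^i\,dx$ do not obviously factor through that mechanism, and the proof that they nevertheless form a basis of $H^1$ depends essentially on the full force of the parameter hypotheses $\a_{n_k-1}^{(k)}\neq 0$ and $\a_0^{(k)}\notin\Z$ at each finite $p^{(k)}$, together with the normalization $z_0^{(\ell)}={}^t(1,0)$ which places the only admissible pole of a polynomial $1$-form at one of the arrangement points. The second task, by contrast, is a clean translation via classical symmetric-function identities.
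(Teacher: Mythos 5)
Your proposal is correct, but it is worth noting that the paper does not actually prove this proposition: it simply defers to \cite{kim97-1,kim2010,tera}. So you have supplied a genuine argument where the text gives only a citation. Your second half is exactly what the paper's machinery is designed for -- once $x^{i}dx$, $0\leq i\leq N-3$, is known to be a basis of $H^{1}(\W^{\bullet}(X),\na_{\w})$, the power-sum forms $\psi_{j}=\frac{1}{j+1}dP_{j+1}(y)$ satisfy the hypothesis (\ref{eq:proof-4}) of Proposition \ref{prop:exp-basis-1}, and the bialternant identity together with the Jacobian $dy_{1}\wedge\cdots\wedge dy_{r}=\pm\Delta(x)\,dx_{1}\wedge\cdots\wedge dx_{r}$ and the injectivity of $\pi^{*}$ on $\fraks_{r}$-invariant top forms converts the basis $\psi_{j_{1}}\wedge\cdots\wedge\psi_{j_{r}}$ into the Schur forms $S_{\mu}(y)\,dy_{1}\wedge\cdots\wedge dy_{r}$; the count of exponent sets $0\leq j_{1}<\cdots<j_{r}\leq N-3$ matches the partitions $\mu\subset(N-r-2)^{r}$ as you say. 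Your first half is a reconstruction of the content of \cite{kim97-1} by pole counting, and the argument goes through; two small points deserve explicit mention. First, the claim that $\w$ has a pole of order exactly $n_{k}$ at a finite $p^{(k)}$ with leading coefficient proportional to $\a_{n_{k}-1}^{(k)}$ uses that $\bdx z_{1}^{(k)}$ does not vanish at $p^{(k)}$, which is exactly the generic-stratum condition $\det(z_{0}^{(k)},z_{1}^{(k)})\neq0$ for $z\in Z_{2}$; without it the order could drop and the induction forcing $g$ to vanish to order $n_{k}$ would fail. Second, for the forms $x^{i}dx$ on $X$ and the polynomial forms $\psi_{j}$, $S_{\mu}(y)\,dy$ on $Y$ to lie in $\W^{\bullet}(X)$ and $\W^{\bullet}(Y)$ you need the hyperplane at infinity to belong to the arrangement, which is where the normalization $z_{0}^{(\ell)}={}^{t}(1,0)$ (hence $w_{0}^{(\ell)}={}^{t}(1,0,\dots,0)$, so $H_{\ell}=\{t_{0}=0\}$) enters; this is worth stating once rather than leaving implicit. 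With those two sentences added, your argument is a complete and self-contained proof of the proposition, which is more than the paper itself provides.
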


For this proposition, see \cite{kim97-1,kim2010,tera}.

\subsection{Comments}

We make a few comments on the basis we have constructed in Propositions
\ref{prop:Basis-3}, \ref{prop:Basis-4}.

1) Taking any point $w\in Z_{r+1}$, not necessary a Veronese point,
we can define the $1$-forms $\psi_{1},\dots,\psi_{N-2}\in\W^{1}(Y)$
as in Proposition \ref{prop:Basis-3}. Then the $r$-forms 
\begin{equation}
\psi_{j_{1}}\wedge\cdots\wedge\psi_{j_{r}},\quad1\leq j_{1}<\cdots<j_{r}\le N-2,\label{eq:basis-8}
\end{equation}
which depend holomorphically on $w$, determine the classes in $H^{r}(\W^{\bu}(Y),\na_{\tilde{\w}})$.
In the case $w$ is in the Veronese image $\Phi_{\l}(Z_{2})$, these
$r$-forms give a basis of the cohomology group $H^{r}(\W^{\bu}(Y),\na_{\tilde{\w}})$,
and in particular, these classes are linearly independent. Since the
linear dependency gives a certain algebraic condition on $w\in Z_{r+1}$,
we see that these $r$-forms give linearly independent classes in
the cohomology group $H^{r}(\W^{\bu}(Y),\na_{\tilde{\w}})$ for $w$
belonging to some Zariski open subset of $Z_{r+1}$ containing the
Veronese image. 
\begin{conjecture}
For any $w\in Z_{r+1}$, the cohomolgy of the complex $(\W^{\bu}(Y),\na_{\tilde{\w}})$
is pure, $\dim_{\C}H^{r}(\W^{\bu}(Y),\na_{\tilde{\w}})=\binom{N-2}{r}$,
and the $r$-forms in (\ref{eq:basis-8}) give a basis of the cohomology
group $H^{r}(\W^{\bu}(Y),\na_{\tilde{\w}})$.
\end{conjecture}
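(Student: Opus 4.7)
My plan is to combine two inputs---(i) the appendix result that the Pfaffian system satisfied by $F(w)$ on $\zrn$ is holonomic without singularities, and (ii) the pointwise description at Veronese points given by Theorem~\ref{thm:main}---and to propagate the latter across all of $\zrn$ via a family argument.

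First I would view the complex $(\W^{\bu}(Y),\na_{\tilde{\w}})$ as varying in a family over $\zrn$, so that its hypercohomology $\mathcal{H}^{p}$ is a coherent sheaf on $\zrn$ equipped with the Gauss--Manin connection. The absence of singularities forces $\mathcal{H}^{p}$ to be a vector bundle (the solution sheaf of a regular meromorphic connection without singular locus is a local system, hence algebraically a flat vector bundle), and its flatness forces the fiber dimension $w\mapsto\dim_{\C}\cohom pY{\tilde{\w}}$ to be locally constant on the connected variety $\zrn$. Evaluating at any Veronese point and invoking Theorem~\ref{thm:main} then gives $\mathcal{H}^{p}=0$ for $p\ne r$ and $\mathrm{rank}\,\mathcal{H}^{r}=\binom{N-2}{r}$ on all of $\zrn$, establishing the first two assertions of the conjecture.

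For the third assertion, I would view each $r$-form $\psi_{j_{1}}\wedge\cdots\wedge\psi_{j_{r}}$ in (\ref{eq:basis-8}) as a global section of $\mathcal{H}^{r}$, and assemble the $\binom{N-2}{r}$ such sections into a morphism $\cO_{\zrn}^{\oplus\binom{N-2}{r}}\to\mathcal{H}^{r}$ of vector bundles of equal rank. Its determinant is a section $\Psi$ of the line bundle $\det\mathcal{H}^{r}$, whose vanishing locus $D=\{\Psi=0\}$ is a Zariski closed subset disjoint from $\Phi_{\l}(\znn)$ by the remark preceding the conjecture. The conjecture then reduces to showing $D=\emptyset$.

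The main obstacle is exactly this last step: a priori, a section of a line bundle on $\zrn$ that is nonzero on the Veronese image may still vanish on a divisor elsewhere, and merely knowing that $D$ is Zariski closed of positive codimension is not enough to force $D=\emptyset$. A natural strategy is to show that $\Psi$ is a horizontal (flat) section of $\det\mathcal{H}^{r}$ with respect to the Gauss--Manin connection; since this line bundle inherits a flat connection without singularities on the connected space $\zrn$, any horizontal section either vanishes identically or is nowhere zero, and nonvanishing at Veronese points would then force $D=\emptyset$. Establishing horizontality reduces to a direct computation of how the explicit $1$-forms $d\theta_{i}(\bdy w^{(k)})$ transform under infinitesimal variation of $w$, compared with the Gauss--Manin connection form on $\mathcal{H}^{r}$. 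An alternative approach is a degeneration argument, deforming $w\in\zrn$ along a one-parameter family toward a Veronese point and using Theorem~\ref{thm:main} on the special fiber together with flatness of $\mathcal{H}^{r}$ to lift the basis property. In both strategies the real technical difficulty is controlling the interaction between the explicit combinatorial structure of the $\psi_{j}$ and the Gauss--Manin connection, which becomes markedly more subtle in the confluent case where $g$ contributes higher-order poles to $\tilde{\w}$.
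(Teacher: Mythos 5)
This statement is labelled a \emph{Conjecture} in the paper, and the paper does not prove it: the only evidence offered is (i) Theorem \ref{thm:main}, which settles the claim at Veronese points, (ii) the observation that linear independence of the forms (\ref{eq:basis-8}) is a Zariski open condition, hence holds on some Zariski open subset containing the Veronese image, and (iii) the Appendix, which shows the hypergeometric system (\ref{eq:proof-7}) is holonomic with empty singular locus on $Z_{r+1}$. Your proposal follows exactly this circle of ideas, and you correctly identify that the whole difficulty is concentrated in showing the degeneracy locus $D$ is empty; since you leave that step open, what you have written is a strategy, not a proof, and it does not close the gap that the paper itself leaves open.

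Two further points in your outline deserve a warning. First, the inference from ``$\clam$ has no singularities on $Z_{r+1}$'' to ``$\mathcal{H}^{p}$ is a flat vector bundle of locally constant rank'' is not automatic: $\clam$ is the $\cD$-module generated by the operators annihilating the \emph{integral} $F(z)$, and identifying its rank with $\dim_{\C}\cohom rY{\tilde{\w}}$ at \emph{every} point of $Z_{r+1}$ (rather than at generic points) presupposes a perfect pairing between cohomology and the solution space fiberwise --- which is essentially the purity and dimension statement you are trying to prove. Fiberwise cohomology of a family of complexes can jump up on proper Zariski closed subsets without contradicting constancy of the rank of the solution local system. Second, the horizontality route for $\Psi$ is unlikely to work as stated: the classes $[\psi_{j_{1}}\wedge\cdots\wedge\psi_{j_{r}}]$ are explicit algebraic sections, not flat sections of the Gauss--Manin connection, so $\Psi$ satisfies $d\log\Psi=\tr(\text{connection matrix in this frame})$ rather than $\nabla\Psi=0$; such a ``Wronskian'' can a priori vanish on a divisor. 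The viable version of this idea --- pursued in the reference \cite{kim2010} cited by the paper --- is to compute $\Psi$ explicitly and show it is a product of powers of the minors $\det z_{\mu}$, which are nonvanishing on $Z_{r+1}$ by definition of the generic stratum; without such an explicit computation the argument does not go through.
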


Supporting fact for this conjecture is the holonomicity of the general
hypergeometric system (\cite{tanisaki}, Propostion 4.5), and this
system has no singular point in $Z_{r+1}$. We give an elementary
proof of the latter fact in Appendix.

2) When the partition $\l$ of $N$ is $\l=(1,\dots,1)$, the general
HGF on $Z_{r+1}$ is Gelfand's HGF. In this case, write $w\in Z_{r+1}$
as $w=(w_{1},\dots,w_{N}),\;w_{j}\in\C^{r+1}$ and put 
\[
f_{j}(y):=\bdy w_{j}=w_{0j}+w_{1j}y_{1}+\cdots+w_{rj}y_{r}.
\]
Then the $1$-forms considered in (\ref{eq:basis-2}) are written
as 
\[
\psi_{j}:=d\log f_{j}-d\log f_{N},\quad1\leq j\leq N-1
\]
and the forms (\ref{eq:basis-8}) are called the logarithmic forms.
It is known that, under an appropriate assumption on the parameter
$\a$, we can have a basis of the cohomology group $H^{r}(\W^{\bu}(Y),\na_{\tilde{\w}})$
consisting of logarithmic forms \cite{Ao-Kita-book}.

3) When the partition of $N$ is $\l=(N)$, the general HGF is called
the generalized Airy function \cite{GRS88}. For the Airy function
on $Z_{2}$, a basis of $H^{1}(\W^{\bu}(X),\na_{\w})$ is given by
\[
\psi_{i}:=d(\theta_{i}(\bdx z)),\quad1\leq i\leq N-2.
\]
This basis is an analogue of the flat basis used in computing the
intersection number for the Airy integral \cite{Irina-Ki-Na}. See
also \cite{kimura-taneda}.

\section*{Appendix}

We recall the fact about the system of differential equations satisfied
by the general HGF, and we give an elementary proof of the fact that
the system is holonomic on the generic statum $Z_{r+1}$ and has no
singularity there. 

We use $z=(z_{ij})$ for the coordinates of the matrix space $Z_{r+1}$
and let $\pa_{ij}=\pa/\pa z_{ij}$. The Lie algebra $\fj(n)$ of Jordan
group $J(n)$ is 
\[
\fj(n)=\left\{ \sum_{0\leq j<n}X_{j}\La^{j}\mid X_{j}\in\C\right\} ,
\]
where $\La=\La_{n}$ is the shift matrix of size $n$, and $\ha_{\l}=\fj(n_{1})\oplus\cdots\oplus\fj(n_{\ell})$
is the Lie algebra of $H_{\l}$. Let $\a:\ha_{\l}\to\C$ be the linear
map, where the restriction $\a|_{\fj(n_{k})}$ is given by

\[
(\a|_{\fj(n_{k})})(X_{0}^{(k)}\cdot I+X_{1}^{(k)}\La+\cdots+X_{n_{k}-1}^{(k)}\La^{n_{k}-1})=\sum_{j}\a_{j}^{(k)}X_{j}^{(k)},
\]
 $\a_{j}^{(k)}$ being the parameters in Definition \ref{def:hyp-3}. 
\begin{prop}
(\cite{KHT92}) The general hypergeometric function $F(z)$ is a solution
of the system:
\begin{equation}
\begin{cases}
\square_{ij,pq}F=0 & \quad0\leq i,j\leq r,1\leq p,q\leq N,\\
\{\tr(^{t}(zX)\pa)-\a(X)\}F=0 & \quad X\in\ha_{\l},\\
\{\tr(^{t}(Yz)\pa)+\tr(Y)\}F=0 & \quad Y\in\gl{r+1},
\end{cases}\label{eq:proof-7}
\end{equation}
 where $\square_{ij,pq}=\pa_{ip}\pa_{jq}-\pa_{iq}\pa_{jp}$, $\pa=(\pa_{ip})_{0\leq i\leq r,1\leq p\leq N}$,
and $\gl{r+1}$is the Lie algebra of $\GL{r+1}$.
\end{prop}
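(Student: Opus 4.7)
The plan is to verify each of the three families of equations by direct differentiation of the integral
\[
F(z) = \int_{C(z)} \chi_{\l}(tz;\a)\cdot\tau,
\]
exploiting that the integrand depends on $z$ only through the $\C^{N}$-valued row vector $tz = (tz_{1},\dots,tz_{N})$, where $z_{p}\in\C^{r+1}$ is the $p$-th column of $z$. Since $\pa_{ip}(tz_{q}) = t_{i}\,\de_{pq}$, the chain rule gives
\[
\pa_{ip}F = \int_{C(z)} t_{i}\,(\pa_{p}\chi_{\l})(tz;\a)\cdot\tau,
\]
where $\pa_{p}$ denotes differentiation with respect to the $p$-th coordinate of the argument of $\chi_{\l}$ (viewed via $\iota$ as a function on a subset of $\C^{N}$). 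Throughout I would choose $C(z)$ as a continuously varying family on a small neighborhood so that differentiation under the integral is legitimate.

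The Plücker relations $\square_{ij,pq}F = 0$ then follow at once: a second application of the chain rule yields
\[
\pa_{ip}\pa_{jq}F = \int_{C(z)} t_{i} t_{j}\,(\pa_{p}\pa_{q}\chi_{\l})(tz;\a)\cdot\tau,
\]
which is symmetric in $(p,q)$ since the mixed partials of $\chi_{\l}$ commute, while the $t$-factors are labelled only by $i,j$. Antisymmetrizing in $(p,q)$ gives zero.

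The $H_{\l}$-equations encode the character property of $\chi_{\l}$. For $h \in H_{\l}$ near the identity the right multiplication $z\mapsto zh$ satisfies $t(zh) = (tz)h$ by associativity of matrix multiplication, and under $\iota$ this translates to the product in $\tH_{\l}$; hence $\chi_{\l}(t(zh);\a) = \chi_{\l}(tz;\a)\,\chi_{\l}(h;\a)$, and choosing $C(zh)=C(z)$ (which is legitimate locally since the branches of the local system depend only on $h$) yields $F(zh)=\chi_{\l}(h;\a)\,F(z)$. Setting $h=\exp(sX)$ for $X\in\ha_{\l}$ and differentiating at $s=0$, the chain rule produces the vector field $\sum_{i,p}(zX)_{ip}\pa_{ip} = \tr({}^{t}(zX)\pa)$ on the left and $\a(X)$ on the right, as required.

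Similarly, the $\gl{r+1}$-equations express the quasi-invariance $F(gz)=\det(g)^{-1}F(z)$ for $g\in\GL{r+1}$ near the identity. To establish this I would substitute $t'=tg$ in $F(gz)=\int_{C(gz)}\chi_{\l}(t(gz);\a)\,\tau(t)$, transforming the integrand into $\chi_{\l}(t'z;\a)\cdot\det(g)^{-1}\tau(t')$ (the Jacobian $\det(g)^{-1}$ comes from $\tau$, viewed as the contraction of the volume form on $T$ with the Euler field; the homogeneity condition (\ref{eq:hyp-2}) ensures the integrand descends to $\Ps^{r}$), and arrange $C(gz)$ so that its image under $t\mapsto tg$ is $C(z)$. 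Differentiating the resulting identity at $g=\exp(sY)$ for $Y\in\gl{r+1}$ gives $\tr({}^{t}(Yz)\pa)F = -\tr(Y)F$. The principal technical difficulty in all three cases is justifying the flat-family choice of $C(z)$ under the substitutions $z\mapsto zh$, $z\mapsto gz$, $z\mapsto z+\epsilon E_{ip}$; on a simply connected neighborhood in $\zrn$ this is routine since the relevant local system trivializes, and the resulting differential equations extend to all of $\zrn$ by analytic continuation.
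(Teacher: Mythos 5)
Your derivation is correct and is essentially the standard proof from the cited reference \cite{KHT92}; the paper itself states this proposition with a citation only and gives no proof, so there is nothing internal to compare against. The three steps you use --- (i) $\pa_{ip}(tz_{q})=t_{i}\de_{pq}$ plus commutativity of mixed partials for the operators $\square_{ij,pq}$, (ii) the character identity $\chi_{\l}(\iota^{-1}((tz)h))=\chi_{\l}(\iota^{-1}(tz))\chi_{\l}(h)$ differentiated along $h=\exp(sX)$ (noting $d\chi_{\l}|_{e}=\a$ since the linear part of $\theta_{m}$ is $h_{m}/h_{0}$), and (iii) the quasi-invariance $F(gz)=\det(g)^{-1}F(z)$ coming from the substitution $t\mapsto tg$ and the transformation of $\tau$ --- are exactly the intended argument, and your handling of the locally flat family of cycles $C(z)$ is adequate.
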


We call this system the general hypergeometric system of type $\l$.

Let $\clam$ be the algebraic $\cD$-module on $Z:=Z_{r+1}$ associated
with the system (\ref{eq:proof-7}). Let $\mathrm{Ch}(\clam)\subset T^{*}Z$
be the characteristic variety of $\clam$, namely the set of common
zeros of the principal symbols of all the elements of the ideal generated
by the differential operators appeared in (\ref{eq:proof-7}). Here
$T^{*}Z=Z\times\mat{r+1,N}$ is the contangent bundle of $Z$ with
the fiber coordinates $\xi=(\xi_{ij})$. We want to show the following
\begin{prop}
\label{prop:appen-1}We have $\mathrm{Ch}(\clam)=\{(z,\xi)\in T^{*}Z\mid\xi=0\}$.
Hence $\clam$ is holonomic and has no singularity on $Z$.
\end{prop}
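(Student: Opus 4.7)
To compute $\mathrm{Ch}(\clam)$ I will write down the principal symbols of the generators of the ideal defining the $\cD$-module and show that their common zero locus in $T^{*}Z=Z\times\mat{r+1,N}$ (with fiber coordinates $\xi$) is already the zero section. The only second-order operator is $\square_{ij,pq}$ with symbol $\xi_{ip}\xi_{jq}-\xi_{iq}\xi_{jp}$; the other operators are first order, with principal symbols $\sum_{i,p}(zX)_{ip}\xi_{ip}$ for $X\in\ha_{\l}$ and $\sum_{i,p}(Yz)_{ip}\xi_{ip}$ for $Y\in\gl{r+1}$ (the scalar terms $\a(X)$ and $\tr Y$ are lower order and irrelevant). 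The vanishing of all $\square$-symbols is the Pl\"ucker condition: every $2\times2$ minor of $\xi$ vanishes, so $\mathrm{rank}\,\xi\le1$, and we may write $\xi_{ip}=a_{i}b_{p}$ for some $a\in\C^{r+1}$, $b\in\C^{N}$. The goal then reduces to showing $a=0$ or $b=0$.

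Substituting $\xi_{ip}=a_{i}b_{p}$ into the $\gl{r+1}$-symbol gives $a^{T}Y(zb)$; letting $Y$ run over the matrix units immediately yields $a_{i}(zb)_{j}=0$ for all $(i,j)$, so either $a=0$ (done) or $zb=0$. Assume the latter with $b\ne0$. The $\ha_{\l}$-symbol becomes $(a^{T}z)\cdot(Xb)$. Writing $c:=a^{T}z$ and $b$ block-wise according to $\l=(n_{1},\dots,n_{\ell})$, and letting $X$ run over the shift generators $\La_{n_{k}}^{j}$, the vanishing of these symbols becomes the triangular system
\[
\sum_{i=0}^{n_{k}-1-j}c_{i}^{(k)}\,b_{i+j}^{(k)}=0\qquad(0\le j<n_{k},\ 1\le k\le\ell).
\]
For each $k$ with $b^{(k)}\ne0$, set $k_{0}(k):=\max\{i:b_{i}^{(k)}\ne0\}$ and read the equations from $j=k_{0}(k)$ downward; the shift/triangular structure then forces $c_{0}^{(k)}=c_{1}^{(k)}=\cdots=c_{k_{0}(k)}^{(k)}=0$.

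Now define a subdiagram $\mu\subseteq\l$ by $m_{k}=k_{0}(k)+1$ when $b^{(k)}\ne0$ and $m_{k}=0$ otherwise. Two facts emerge by construction: (i) $a^{T}$ annihilates every column of the submatrix $z_{\mu}$, and (ii) $zb=0$ is a nontrivial linear relation among exactly those columns. Invoke Definition \ref{def:hyp-2}. If $|\mu|\le r+1$, enlarge $\mu$ to a subdiagram $\mu'\subseteq\l$ of weight exactly $r+1$; the generic stratum condition $\det z_{\mu'}\ne0$ then implies that the columns of $z_{\mu}\subseteq z_{\mu'}$ are linearly independent, contradicting $b\ne0$. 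If $|\mu|>r+1$, restrict instead to a sub-subdiagram $\mu'\subseteq\mu$ of weight $r+1$; the $r+1$ columns of $z_{\mu'}$ then span $\C^{r+1}$, so $a^{T}z_{\mu'}=0$ forces $a=0$, again a contradiction. In either case $\xi=0$, giving $\mathrm{Ch}(\clam)=\{\xi=0\}$; holonomicity and the absence of singularities on $Z$ are immediate.

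\textbf{Main obstacle.} The crux is the closing step. The generic stratum assumption only asserts non-vanishing of minors for subdiagrams of weight exactly $r+1$, while the subdiagram $\mu$ produced from the shift equations has a priori arbitrary weight. Making this work requires the combinatorial observation that every $\mu\subseteq\l$ of weight $\le r+1$ extends to, and every $\mu\subseteq\l$ of weight $\ge r+1$ contains, a subdiagram of weight exactly $r+1$, so that the hypothesis can be brought to bear on either $a$ or $b$ depending on the relative size of $|\mu|$ and $r+1$. The rest of the proof is essentially a bookkeeping exercise once the symbols have been identified.
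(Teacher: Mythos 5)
Your proposal is correct and follows essentially the same route as the paper: rank-one factorization $\xi=a\,{}^{t}b$ from the Pl\"ucker symbols, reduction of the first-order symbols to $zb=0$ and the triangular shift system forcing ${}^{t}a\,z_{\mu}=0$, and the dichotomy on $|\mu|$ versus $r+1$ resolved by the generic stratum condition. The only cosmetic difference is that you treat both cases symmetrically, whereas the paper first rules out $|\mu|\leq r+1$ and then concludes $a=0$; the substance is identical.
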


Let $V\subset T^{*}Z$ be the zero locus of the symbols of the differential
operators in (\ref{eq:proof-7}):

\begin{align}
\s(\square_{ij,pq}) & =\xi_{ip}\xi_{jq}-\xi_{iq}\xi_{jp}=0\label{eq:app-1}\\
\s(\tr(^{t}(zX)\pa)-\a(X)) & =\tr(^{t}(zX)\xi)=0\label{eq:app-2}\\
\s(\tr(^{t}(Yz)\pa)+\tr(Y)) & =\tr(^{t}(Yz)\xi)=0\label{eq:app-3}
\end{align}
where $X$ and $Y$ are arbitrary elements of $\ha_{\l}$ and $\gl{r+1}$,
respectively. For the proof of Proposition \ref{prop:appen-1}, it
is enough to prove the following lemma.
\begin{lem*}
$V=\{(z,\xi)\in T^{*}Z\mid\xi=0\}.$
\end{lem*}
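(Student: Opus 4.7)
The plan is to exploit the Pl\"ucker-type quadratic relations (\ref{eq:app-1}) to reduce $\xi$ to rank at most one, and then to combine the two linear conditions (\ref{eq:app-2}), (\ref{eq:app-3}) with the generic-stratum hypothesis to force $\xi$ to vanish. First, equation (\ref{eq:app-1}) is the vanishing of every $2\times 2$ minor of $\xi\in\mat{r+1,N}$, so $\mathrm{rank}\,\xi\le 1$. Either $\xi=0$, which is the desired conclusion, or $\xi=ab$ with $a\in\C^{r+1}$ a nonzero column vector and $b\in\C^{N}$ a nonzero row vector. I work by contradiction and assume the latter throughout.

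Specializing (\ref{eq:app-3}) to the elementary matrices $Y=E_{kl}$ in $\gl{r+1}$ yields $\xi z^{T}=0$; since $a\ne 0$ this is equivalent to $bz^{T}=0$, i.e.\ $b$ is orthogonal to every row of $z$. Condition (\ref{eq:app-2}) decomposes block by block along $\ha_{\l}=\bigoplus_{k}\fj(n_{k})$: writing $c=a^{T}z\in\C^{N}$ and plugging $X^{(k)}=\La^{j}\in\fj(n_{k})$ for $j=0,\dots,n_{k}-1$ into (\ref{eq:app-2}) gives, for each $k$,
\[
\sum_{i=0}^{n_{k}-1-j}c_{i}^{(k)}b_{i+j}^{(k)}=0,\qquad j=0,1,\dots,n_{k}-1.
\]
Using the identification $\fj(n_{k})\simeq R_{n_{k}}=\C[T]/(T^{n_{k}})$ of Remark \ref{rem:hyp-4}(3), these $n_{k}$ scalar relations are exactly the statement that the product in $R_{n_{k}}$ of $\hat{c}^{(k)}(T)=\sum_{i}c_{i}^{(k)}T^{i}$ and the reversed polynomial $(\hat{b}^{(k)})^{*}(T)=\sum_{i}b_{n_{k}-1-i}^{(k)}T^{i}$ vanishes.

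Because $R_{n_{k}}$ is a local ring with maximal ideal $(T)$, such a product vanishes iff the sum of the $T$-adic orders of the two factors is at least $n_{k}$. Unwinding this in terms of the supports of $c^{(k)}$ and $b^{(k)}$ furnishes an integer $m_{k}\in\{0,1,\dots,n_{k}\}$ such that $a^{T}z_{j}^{(k)}=0$ for every $j<m_{k}$ and $b_{j}^{(k)}=0$ for every $j\ge m_{k}$. Set $\mu=(m_{1},\dots,m_{\ell})$, which is a subdiagram of $\l$. Then the $|\mu|$ columns comprising $z_{\mu}$ all lie in the hyperplane $a^{\perp}\subset\C^{r+1}$, which is $r$-dimensional, while the nonzero entries of $b$ occur only at indices belonging to $\mu$, and $bz^{T}=0$ still holds.

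To conclude I compare $|\mu|$ with $r+1$ and invoke the definition of $\zrn$. If $|\mu|\ge r+1$, choose any subdiagram $\mu'\subset\mu\subset\l$ with $|\mu'|=r+1$; then the $r+1$ columns of $z_{\mu'}$ all lie in the $r$-dimensional subspace $a^{\perp}$, forcing $\det z_{\mu'}=0$, contrary to $z\in\zrn$. If $|\mu|\le r$, extend $\mu$ to a subdiagram $\mu'\subset\l$ of weight $r+1$ (possible since $|\l|=N>r+1$); then $\det z_{\mu'}\ne 0$ gives the linear independence of the columns of $z_{\mu'}$, in particular of those of $z_{\mu}$, so $bz^{T}=0$ (supported on $\mu$) forces $b=0$, contradicting $b\ne 0$. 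The main obstacle is the third paragraph: extracting the correct splitting index $m_{k}$ from the shift-matrix relations via the local ring $R_{n_{k}}$. Once this splitting is in place, the generic-stratum condition finishes the argument uniformly, and the reverse inclusion $\{\xi=0\}\subset V$ is trivial.
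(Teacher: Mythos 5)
Your proof is correct and follows essentially the same route as the paper's: reduce $\xi$ to rank one via (\ref{eq:app-1}), extract the subdiagram $\mu=(m_{1},\dots,m_{\ell})$ from the block conditions, and use the generic-stratum determinants to kill $a$ or $b$. The only difference is cosmetic: you phrase the triangular system coming from ${}^{t}az^{(k)}\La^{m}b^{(k)}=0$ as the vanishing of a product in the local ring $R_{n_{k}}$ and read off $m_{k}$ from $T$-adic orders, where the paper solves the same system directly by downward induction on $m$.
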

\begin{proof}
Take $(z,\xi)\in V$. From $\xi_{ip}\xi_{jq}-\xi_{iq}\xi_{jp}=0$
for any $0\leq i,j\leq r,1\leq p,q\leq N,$ we see that the rank of
$\xi$ is less than $1.$ It implies that $\xi$ can be written as 

\[
\xi=a\cdot\,^{t}b
\]
 for some column vectors $a\in\C^{r+1},b\in\C^{N}.$ According as
the partition $\l=(n_{1},\dots,n_{\ell})$, we write $b$ in a block
form

\[
b=\left(\begin{array}{c}
b^{(1)}\\
\vdots\\
b^{(\ell)}
\end{array}\right),\quad b^{(k)}=\left(\begin{array}{c}
b_{0}^{(k)}\\
\vdots\\
b_{n_{k}-1}^{(k)}
\end{array}\right).
\]
 Then we have, for any $k,$ 
\[
\xi^{(k)}=a\cdot\,^{t}b^{(k)}.
\]
 Putting these into the equations (\ref{eq:app-2}), (\ref{eq:app-3}),
we have 

\begin{align}
^{t}az^{(k)}\La_{n_{k}}^{m}b^{(k)} & =0,\quad1\leq k\leq\ell,\,0\leq m<n_{k},\label{eq:app-4}\\
zb & =0\label{eq:app-5}
\end{align}
We want to show $\xi=0.$ If $b=0$, then we are done. So we assume
$b\neq0.$ Then we can choose, for each $k,$the integer $m_{k}$
such that $0\leq m_{k}\leq n_{k}$ and 
\begin{equation}
b_{m_{k}-1}\neq0,\;b_{m_{k}}=\cdots=b_{n_{k}-1}=0.\label{eq:app-6}
\end{equation}
We have the subdiagram $\mu=(m_{1},\dots,m_{\ell})$ of $\l$. Since
$b\neq0$, we must have $|\mu|=m_{1}+\cdots+m_{\ell}>r+1.$ In fact,
if $|\mu|\leq r+1,$we can choose a subdiagram $\nu=(p_{1},\dots,p_{\ell})\subset\l$
satisfying $\mu\subset\nu$ and $|\nu|=r+1$. Then, (\ref{eq:app-5})
reads $z_{\nu}b_{\nu}=0$, where 
\[
b_{\nu}=\,^{t}(b_{0}^{(1)},\dots,b_{p_{1}-1}^{(1)},\dots,b_{0}^{(\ell)},\dots,b_{p_{\ell}-1}^{(\ell)}).
\]
But $\det z_{\nu}\neq0$ by the assumption $z\in Z$, it means that
$b_{\nu}=0$ and hence $b=0$. Thus we must have $|\mu|>r+1.$ Then
from (\ref{eq:app-4}) we have $^{t}a\cdot z_{\mu}=0.$ But from $|\mu|>r+1$
and $z\in Z,$ we have $rank\:z_{\mu}=r+1.$ It follows that $a=0$
and hence $\xi=0$. 
\end{proof}

\end{document}